\def\para#1{\vskip .4\baselineskip\noindent{\bf #1}}
\newtheorem{thm}{Theorem}[section]
\newtheorem {asp}{Assumption}[section]
\newtheorem{lm}{Lemma}[section]
\newtheorem{pron}{Proposition}[section]
\newtheorem{deff}{Definition}[section]
\newtheorem{lem}{Lemma}[section]
\newtheorem{prop}{Proposition}[section]
\theoremstyle{remark}
\newtheorem{rem}{Remark}
\newtheorem{example}{Example}[section]
\numberwithin{equation}{section}
\newcommand{\eps}{\varepsilon}
\newcommand{\M}{\mathcal{M}}
\newcommand{\F}{\mathcal{F}}
\newcommand{\E}{\mathbb{E}}
\newcommand{\PP}{\mathbb{P}}
\newcommand{\R}{\mathbb{R}}
\newcommand{\Lom}{\mathcal{L}}
\newcommand{\abs}[1]{\left\vert#1\right\vert}
\numberwithin{equation}{section}
\newcommand{\wdt}{\widetilde}
\newcommand{\PPi}{\PP_{u,v,\pi}}
\newcommand{\Ei}{\E_{u,v,\pi}}
\newcommand{\bed}{\begin{displaymath}}
\newcommand{\eed}{\end{displaymath}}
\newcommand{\bea}{\bed\begin{array}{rl}}
\newcommand{\eea}{\end{array}\eed}
\newcommand{\barray}{\begin{array}{ll}}
\newcommand{\earray}{\end{array}}
\def\disp{\displaystyle}
\newcommand{\1}{\boldsymbol{1}}
\newcommand{\bmu}{\boldsymbol{\mu}}
\newcommand{\bdelta}{\boldsymbol{\delta}}
\def\bar{\overline}
\def\hat{\widehat}
\def\a.s{\text{\;a.s.\;}}
\title{
Stochastic Epidemic SIR Models with Hidden States}
\author[N. Du]{Nguyen Du }
\address{Department of Mathematics-Mechanics-Informatics \\
Vietnam National University of Science\\
34 Nguyen Trai\\
 Thanh Xuan, Ha Noi\\
Vietnam}
\email{nhdu@viasm.edu.vn}
\author[A. Hening]{Alexandru Hening }
\address{Department of Mathematics\\
Texas A\&M University\\
Mailstop 3368\\
College Station, TX 77843-3368\\
United States
}
\email{ahening@tamu.edu}
\author[N. Nguyen]{ Nhu Nguyen }
\address{Department of Mathematics \\
University of Connecticut\\
341 Mansfield Road U1009\\
Storrs, Connecticut 06269-1009\\
United States. {\rm The research of this author was supported in part by the National Science Foundation.}}
\email{nguyen.nhu@uconn.edu }
\author[G. Yin]{George Yin}
\address{Department of Mathematics \\
University of Connecticut\\
341 Mansfield Road U1009\\
Storrs, Connecticut 06269-1009\\
United States. {\rm The research of this author was supported in part by the National Science Foundation.}}
\email{gyin@uconn.edu}
\keywords {SIR model; Extinction;  Permanence; Stationary Distribution; Ergodicity}
\subjclass[2010]{34C12, 60H10, 92D25}
\begin{document}
\begin{abstract}
This paper focuses on and
analyzes  realistic SIR models that take  stochasticity
into account. The proposed systems
are applicable to
most incidence rates that are used in the literature including the bilinear incidence rate, the Beddington-DeAngelis incidence rate, and a Holling type II functional response.
 Given that many diseases can lead to asymptomatic infections, we look at a system of stochastic differential equations that also includes a class of hidden state individuals, for which the infection status is unknown. We assume that the direct observation of the percentage of hidden state individuals that are infected, $\alpha(t)$, is not given and only
 a noise-corrupted observation
 process is available.
 Using
 the nonlinear filtering techniques
 in conjunction with an invasion type analysis (or analysis using Lyapunov exponents from the dynamical system point of view), this paper
 proves that the long-term behavior of the disease is governed by a threshold $\lambda\in \R$
  that depends on the model parameters. It turns out that if $\lambda<0$ the number $I(t)$ of infected individuals converges to zero exponentially fast, or the extinction happens. In contrast,
  if $\lambda>0$, the infection is endemic and the system 
  is permanent.
  We showcase our results by applying them in specific illuminating examples.
Numerical simulations are also given to illustrate our results.
\end{abstract}

\maketitle

\section{Introduction}\label{sec:int}

The SIR epidemic models introduced first by \cite{KER1,KER2} look at the dynamics of susceptible, infected, and recovered individuals, whose densities at the time $t$ are denoted by $S(t)$, $I(t)$, and $R(t)$, respectively. In the absence of random effects,
the dynamics  are
described
by the following system of differential equations
\begin{equation}\label{1.0}
\begin{cases}
dS(t)=\big[a_1-\mu_SS(t)-F(S(t),I(t))\big]dt, \\
dI(t)=\big[-(\mu_I+r)I(t) + F(S(t),I(t)\big]dt,\\
dR(t)=\big[-\mu_RR(t)+rI(t)\big]dt.
\end{cases}
\end{equation}
Here $a_1>0$ is the recruitment rate of the population, $\mu_S,\mu_I,\mu_R>0$ are the death rates of the susceptible, infected, and recovered individuals, $r>0$ is the recovery rate of the infected individuals and $F(S(t),I(t))$ is the incidence rate.
The dynamics of recovered individuals have no effect on that of the disease transmission.
As a result, it is the usual practice not to consider the recovered individuals as part of the problem formulation. We adopt this practice
throughout this paper.
 Various types of incidence rates $F(S,I)$ have been considered in the literature, for example,
the Holling type II functional response  $F(S,I)=\frac{\beta SI}{m_1+S}$ (see \cite{HUO}), the bilinear functional response $F(S,I)=\beta SI$ (see \cite{DANG,ZHANG}), the nonlinear functional response  $F(S,I)=\frac {\beta SI^l}{1+m_2I^h}$ (see \cite{RUAN,YANG}),
and the Beddington-DeAngelis functional response $F(S,I)=\frac{\beta SI}{1+m_1S+m_2I}$  (see \cite {NHU1, NHU2}).

It is by now widely known that in order
to have
a realistic model, one cannot ignore random environmental fluctuations (temperature, the climate, the water resources, etc.). In this paper we consider stochastic epidemic models of the form
\begin{equation}\label{e:unhid}
\begin{cases}
dS(t)=\big[a_1-b_1S(t)-I(t)f(S(t),I(t))\big]dt + \sigma_1 S(t) dB_1(t),\\
dI(t)=\big[-b_2I(t) + I(t)f(S(t),I(t))\big]dt + \sigma_2I(t) dB_2(t),\\
\end{cases}
\end{equation}
where $B_1(t)$ and $B_2(t)$ are independent Brownian motions and $\sigma_1,\sigma_2\neq 0$ are the noise intensities (standard deviations).
Moreover, in the above we have rewritten the coefficients: $b_1:=\mu_S, b_2:=\mu_I+r$ and $F(S,I)=If(S,I)$ (compare this to \eqref{1.0}). This system has been analyzed in a
general setting in \cite{DN-JDE}.

It is well-known that there are diseases for which certain infected individuals
are asymptomatic.
Covid-19 is one such example -- there have been many reports of infections where the infected exhibit no symptoms. We intend to capture this type of behavior in our model.
In order to do this, we assume that
the group of infected individuals that has incidence rate $I(t)f(S(t),I(t))$ (the rate that describes how the disease spreads from infected groups to susceptible groups) in the classical setting, now contains 2 sub-groups. The first group contains individuals who have been confirmed to be infected and
with the incidence rate $I(t)f_1(S(t),I(t))$ (we will still denote this incidence rate by $I(t)f(S(t),I(t))$
 for
  notational simplicity).
The second group has incidence rate $I(t)h(S(t),I(t))$ and contains  people whose
infection status is unknown or hidden.
Let $\alpha(t)$ be a Markov process
taking values in $\M=[0,1]$. We suppose that $\alpha(t)$
represents the percentage of
individuals in
the hidden-status class
that are infected at time $t\geq0$
and that only noise-corrupted observations of
$\alpha(t)$
are
available. More specifically, one can only observe $\alpha(t)$ with additive white noise.

It is natural to assume that the hidden status of potentially infected individuals affects the spread of the disease. As a result,
we let
 the functions $f$ and $h$
depend on $\alpha(t)$.
With
the hidden state dynamics in \eqref{e:unhid}, we obtain
\begin{equation}\label{eq-main-1}
\begin{cases}
dS(t)=\Big[a_1-b_1S(t)-I(t)f(\alpha(t),S(t),I(t))-\alpha(t)I(t)h(\alpha(t),S(t),I(t))\Big]dt + \sigma_1 S(t) dB_1(t),\\[1ex]
dI(t)=\Big[-b_2I(t) + I(t)f(\alpha(t),S(t),I(t))+\alpha(t)I(t)h(\alpha(t),S(t),I(t))\Big]dt + \sigma_2I(t) dB_2(t).
\end{cases}
\end{equation}

\begin{rem}
One can understand the dynamics by looking at individuals from group $S$, in which
susceptible individuals are infected at the rate $If(S,I)$.
We assume that $\alpha(t)$ percent of the rate of the potentially infected individuals are actually infected. Then we can say that members of the hidden group infect susceptible individuals at the rate $\alpha I h(S,I)$.
\end{rem}
\begin{rem}
We could combine $I(t)f(\alpha(t),S(t),I(t))+\alpha(t) I(t) h(\alpha(t),S(t),I(t))$ to
produce  a new function. However, we choose the current setup
 to make
the formulation and motivation clear. Moreover, this will also be more convenient for later discussions.
\end{rem}

Our results can be summarized as follows.
Because the infection status of certain individuals is hidden,
and
$\alpha(t)$ is not directly available,
the dynamics of \eqref{eq-main-1} are difficult to study.
To overcome the
difficulty, we apply the nonlinear filtering techniques by considering the conditional distribution of the process $\alpha(t)$ given the observations. This enables us to
replace the hidden Markov process $\alpha(t)$ in
\eqref{eq-main}
by
the corresponding conditional distribution.
We start by studying the well-posedness of the equation under consideration together with the positivity of solutions, the Markov-Feller property, and some moment estimates. Next, we study the longtime behavior of the system.
Under the assumption for ergodicity of nonlinear filtering  \cite{Han09,Kun71} and using ideas from dynamical systems, by considering the boundary equation and growth rate (see e.g., \cite{DANG,DN-JDE,HN18}), we are able to prove that
there is a threshold $\lambda$ such that if $\lambda<0$, the number of the infected individuals $I(t)$ tends to zero
exponentially fast
and if $\lambda>0$, all invariant probability measures of the system concentrate on $\mathbb{R}_+^{2,\circ}:=(0,\infty)^2$, and then the systems is permanent.
We show that the threshold $\lambda$ also characterizes
the permanence and extinction of the system \eqref{eq-main-1}.
 We also study the case when the process $\alpha(t)$ is a hidden Markov chain
 taking values in a finite set.
 Next, we demonstrate our results using
 simple examples and numerical simulations.

The rest of the paper is organized as follows. We give the mathematical formulation of our problem in Section \ref{secpre}.
Section \ref{sec:thres} introduces the threshold $\lambda$. The sign of $\lambda$ will be used to characterize the longtime behavior of the underlying system.
Section \ref{sec:lon} is devoted to the characterization of the longtime dynamics of our system. Section \ref{sec:disc} offers
some interpretations and implications of our results.
Finally, Section \ref{sec:exp} provides some simple examples and simulations to illustrate our theoretical results.

\section {
Problem
Formulation}\label{secpre}

Throughout this paper we use
$\R_+:=[0,\infty)$, $\R^\circ_+:=(0,\infty)$,
$\R_+^2:=[0,\infty) \times [0,\infty)$, and $\R^{2,\circ}_+:=(0,\infty)\times (0,\infty)$.
Let $(\Omega,\F,\{\F_t\}_{t\geq0},\PP)$ be a complete probability space with filtration $\{\F_t\}_{t\geq0}$ satisfying the usual conditions, and $B_1(t)$, $B_2(t)$, and $W(t)$ be mutually independent standard Brownian motions. The process
$\alpha(t)$ (termed a signal process) is assumed to be an adapted stochastic process taking values in $[0,1]$ that is independent of $B_1(t)$, $B_2(t)$, and $W(t)$.
Moreover, $\M$ will denote the space of probability measures on $([0,1],\mathcal B ([0,1]))$ endowed with the weak topology,
and $C[0,1]$ the spaces of all real-valued continuous functions on $[0,1]$.
For any function $l\in C[0,1]$ and $\mu\in\M$, set
$$
\mu(l):=\int_0^1 l(x)\mu(dx).
$$

As discussed above, we consider the setting where
the precise values
$\alpha(t)$ are not available and only noisy observations are available.
The observation process $y(t)$ of the signal process $\alpha(t)$
is given by
\begin{equation}\label{eq-oby}
dy(t)=g(\alpha(t))dt+dW(t),\quad y(0)=0,
\end{equation}
where $g:[0,1]\to\R$ is a continuous function.
Let
$
\F_t^y:=\sigma\{y(s):0\leq s\leq t\}\bigvee\sigma(\alpha(0)),$
where $\bigvee$ denotes the smallest $\sigma$-algebra generated by the union of some $\sigma$-algebras.
Let $\Pi_t(\cdot)\in \M$ be the conditional distribution of the signal process $\alpha(t)$ given the observation $y(t)$ and the initial data, i.e.,
$$
\Pi_t(A)=\PP[\alpha(t)\in A|\F_t^y], A\in\mathcal B([0,1]).
$$
Such $\{\Pi_t(\cdot)\}$ is called nonlinear filtering.


The field of nonlinear filtering has a long history.
The main idea stems from replacing  the unknown state by its conditional distributions. The earliest result was the well-known Kushner's equation
\cite{Kushner64}. Subsequently, the Duncan-Mortensen-Zakai equation came into being
\cite{Duncan,Mortensen,Zakai}. In this paper, we make use of the version of filtering developed by
Fujisaki-Kallianpur-Kunita \cite{FFK72}. We will also make use of the Wonham filter for hidden Markov chains, which is one of the handful finite-dimensional filters in existence \cite{Won65}.

To proceed,
we detail the results of
Fujisaki-Kallianpur-Kunita \cite{FFK72} (see also \cite{KS69}), which involve
a differential equation for the nonlinear filtering $\Pi_t(\cdot)$, next.
Define
$$
\beta(t)=y(t)-\int_0^t \Pi_s(g)ds,
$$
and note that the process $\beta(t)$ is a one-dimensional Wiener process, see e.g., \cite[Theorem 7.2]{LS01}.
Moreover, $
\sigma\{\beta(t_2)-\beta(t_2):t_2\geq t_1\geq t\}$ and $\F_t^y$ are independent for all $t\geq 0.$
If the signal process $\alpha(t)$ is a Markov process with infinitesimal generator $A$, then $\Pi_t(\cdot)$ is the solution of
\begin{equation}\label{eq-pi}
\Pi_t(l)=\Pi_0(l)+\int_0^t\Pi_s(Al)ds+\int_0^t \left( \Pi_s(lg)-\Pi_s(l)\Pi_s(g)\right)d\beta(s),\quad\forall l\in\mathcal D(A).
\end{equation}
The interested reader is referred to the detailed analysis given in \cite{FFK72,KS69}.

We will not make use of \eqref{eq-pi} often in our analysis, except for establishing some preliminary properties.
The stochastic differential equation for $\Pi_t(\cdot)$ is
rather complex and is not the main concern of the current paper.
As will be seen in the next section, we only need
to establish the related ergodicity. Thus,
for us, it suffices to consider
$\Pi_t(\cdot)$ as a stochastic process taking values in $\M$.
In addition,
we use
continuous measurable modification of $\Pi_t(\cdot)$;
such a modification always exists
\cite{Kun71}.

Under the premise that
one only observes a noisy version of
$\alpha(t)$,
we proceed to study  system \eqref{eq-main-1} by using
the nonlinear filter $\{\Pi_t(\cdot)\}$
with given the information of the observation process $y(t)$. More precisely, we consider the system
\begin{equation}\label{eq-main}
\begin{cases}
dS(t)=\Big[a_1-b_1S(t)-\int_0^1I(t)f(x,S(t),I(t))\Pi_t(dx)
\\
\hspace{2cm}
-I(t)\int_0^1xh(x,S(t),I(t))\Pi_t(dx)\Big]dt + \sigma_1 S(t) dB_1(t),\\[1ex]
dI(t)=\Big[-b_2I(t) + \int_0^1I(t)f(x,S(t),I(t))\Pi_t(dx)
\\
\hspace{2cm}
+I(t)\int_0^1xh(x,S(t),I(t))\Pi_t(dx)\Big]dt + \sigma_2I(t) dB_2(t),
\end{cases}
\end{equation}
where
$$
\begin{aligned}
&\Pi_t(A)=\PP[\alpha(t)\in A|\F_t^y],\forall A\in \mathcal B([0,1]),\\
&dy(t)=g(\alpha(t))dt+dW(t), \quad y(0)=0.
\end{aligned}
$$
Denote by $\PPi$ and $\Ei$ the probability and expectation corresponding to the initial values $S(0)=u$, $I(0)=v$,  $\Pi_0=\pi$, and the distribution of $\alpha(0)$, respectively.
We next make some assumptions that will be used throughout this paper.


\begin{asp}\label{asp-1}
	The following conditions hold:
	\begin{itemize}
		\item The function $f:[0,1]\times \R^2_+\to\R_+$ is nonnegative, $f(x,0,i)=0, \forall x\in[0,1]$, $i\geq 0$. Furthermore, $f$ is Lipschitz continuous,
		i.e., there exists a positive constant $L_1$ such that
		 for all $ x_1,x_2\in[0,1], s_1,s_2,i_1,i_2\geq 0$
		\begin{equation*}
		\begin{aligned}\abs{f(x_1,s_1,i_1)-f(x_2,s_2,i_2)}\leq L_1(|x_1-x_2|+\abs{s_1-s_2}+\abs{i_1-i_2}).
		\end{aligned}
		\end{equation*}
		\item The function $h:[0,1]\times\R_+^2\to\R_+$ satisfies $h(x,0,i)=0$, and is Lipschitz continuous with Lipschitz constant $L_2$, i.e., for all $s_1,s_2,i_1,i_2\geq 0, x_1,x_2\in[0,1]$,
				\begin{equation*}
		\begin{aligned}\abs{h(x_1,s_1,i_1)-h(x_2,s_2,i_2)}\leq L_2(|x_1-x_2|+\abs{s_1-s_2}+\abs{i_1-i_2}).
		\end{aligned}
		\end{equation*}
	\item For each $x\in[0,1]$, functions $h(x,\cdot,0)$ and $f(x,\cdot,0)$ are non-decreasing.
	\end{itemize}
\end{asp}

\begin{rem}
We note that almost all of the incidence rate functions used in the literature (such as the bilinear incidence rate, the Beddington-DeAngelis incidence rate, the Holling type II functional response, etc.) satisfy these conditions. \textit{Recall that the incidence rate in our setting is $If(S,I)$ rather than $f(S,I)$}.

The third condition is imposed because the incidence rate and the growth rate of the hidden class should increase when $S(t)$ and $I(t)$ increase. Since we have rewritten these rates as $If(S,I)$ and $Ih(\alpha,S,I)$, only increasing condition on $S$ is assumed.
\end{rem}

\begin{asp}\label{asp-2}
	The signal process $\alpha(t)$ is a Markov-Feller process that has a unique invariant measure $\mu^*$ and
		$$
		\|P(t,x,\cdot)-\mu^*(\cdot)\|_{\text{TV}}=0,
		$$
		where $P(t,x,\cdot)$ is the transition probability and $\|\cdot\|_{\text{TV}}$ is the total variation norm.
\end{asp}

\begin{rem}
	This assumption is needed to guarantee the ergodicity of the nonlinear filtering as discussed in Section \ref{sec:filter} later.
	Using this, we can then define a threshold that fully characterizes the longtime behavior of the underlying system; see Section \ref{sec:lambda}.
\end{rem}


For $V(s,i):\R^2\to\R$, define the operator $\Lom V$ by
$$
\begin{aligned}
\Lom V[s,i,\pi]=&\dfrac{\partial V}{\partial s} \Big[a_1-b_1s-if(s,i)-i\int_0^1xh(x,s,i)\pi(dx)\Big]\\
&+\dfrac{\partial V}{\partial i} \Big[-b_2i+if(s,i)+i\int_0^1xh(x,s,i)\pi(dx)\Big]\\
&+\frac{\sigma_1^2s^2}2\dfrac{\partial^2 V}{\partial s^2}+\frac{\sigma_2^2i^2}2\dfrac{\partial^2 V}{\partial i^2}.
\end{aligned}
$$
In the above, $(s,i,\pi)\in\R^2\times \M$ represents the variable of $\Lom V$ rather than that of $V$.

\para{Discrete state space and Wonham filter.}
If the Markov process $\alpha(t)$ takes values in a finite space $\{m_1,\dots,m_{n^*}\}\subset [0,1]$ and has generator $\{q_{ik}\}_{i,k\in\{1,\dots,n^*\}}$, the formulation will be
simpler and  more explicit.
We can formulate the problem as follows.
Let
$$
\begin{aligned}
& e_k(t):=\PP(\alpha(t)=m_k|\F_t^y)=\E [\1_{\{\alpha(t)=m_k\}}|\F_t^y], k=1,\dots,n^*,\\ &e(t)=(e_1(t),\dots,e_{n^*}(t)),\\
& \mathcal S_{n^*}:=\left\{e=(e_1,\dots,e_{n^*})\in\R^{n^*}:e_k\geq 0,\sum_{k=1}^{n^*}e_k=1\right\},\\
& g_k:=g(m_k), k=1,\dots,n^*, \quad \bar g(e):=\sum_{k=1}^{n^*} g_ke_k, e=(e_1,\dots,e_{n^*})\in\mathcal S_{n^*}.
\end{aligned}
$$
It was shown in \cite{Won65} that the posterior probability $e(t)$
satisfies the following system of stochastic differential equations
\begin{equation}
\begin{cases}
de_k(t)=\disp\!\left[\!\sum_{i=1}^{n^*} q_{ik}e_i(t)-(g_k-\bar g(e(t)))\bar g(e(t))e_k(t) \!\!\right]\!\!dt\!\!+\! (g_k-\bar g(e(t)))e_k(t)dy(t),\ k=1,\dots,n^*,\\
e_k(0)=e_k^0,\quad k=1,\dots,n^*.
\end{cases}
\end{equation}
In this case, instead of considering  system \eqref{eq-main}, one can study the following system of stochastic differential equation
\begin{equation}\label{eq-main-dis}
\begin{cases}
dS(t)=\Big[a_1-b_1S(t)-I(t)\sum_{k=1}^{n^*}f(m_k,S(t),I(t))e_k(t)
\\
\hspace{2cm}
-I(t)\sum_{k=1}^{n^*}m_kh(m_k,S(t),I(t))e_k(t)\Big]dt+ \sigma_1 S(t) dB_1(t),\\[1ex]
dI(t)=\Big[-b_2I(t) + I(t)\sum_{k=1}^{n^*}f(m_k,S(t),I(t))e_k(t)
\\
\hspace{2cm}
+I(t)\sum_{k=1}^{n^*}m_kh(m_k,S(t),I(t))e_k(t)\Big]dt + \sigma_2I(t) dB_2(t),\\
de_k(t)=\left[\sum_{i=1}^{n^*} q_{ik}e_i(t)-(g_k-\bar g(e(t)))\bar g(e(t))e_k(t) \right]dt+(g_k-\bar g(e(t)))e_k(t)dy(t),\quad  k=1,\dots,n^*.\\
\end{cases}
\end{equation}
Moreover, the process
$$
\bar W(t)=y(t)-\int_0^t \bar g(e(s))ds
$$
is a one-dimensional Brownian motion adapted to $\F_t^y$; see e.g., \cite[Theorem 7.2]{LS01}. Therefore,  system \eqref{eq-main-dis} can be rewritten as
\begin{equation}\label{eq-main-dis-1}
\begin{cases}
dS(t)=\Big[a_1-b_1S(t)-I(t)\sum_{k=1}^{n^*}f(m_k,S(t),I(t))e_k(t)
\\
\hspace{2cm}
-I(t)\sum_{k=1}^{n^*}m_kh(m_k,S(t),I(t))e_k(t)\Big]dt + \sigma_1 S(t) dB_1(t),\\[1ex]
dI(t)=\Big[-b_2I(t) + I(t)\sum_{k=1}^{n^*}f(m_k,S(t),I(t))e_k(t)
\\
\hspace{2cm}
+I(t)\sum_{k=1}^{n^*}m_kh(m_k,S(t),I(t))e_k(t)\Big]dt + \sigma_2I(t) dB_2(t),\\
de_k(t)=\sum_{i=1}^{n^*} q_{ik}e_i(t)+e_k(t)(g_k-\bar g(e(t)))d\bar W(t),\quad  k=1,\dots,n^*.\\
\end{cases}
\end{equation}
This system is easier to analyze than \eqref{eq-main}. However, in this case, we need to assume that the signal process $\alpha(t)$ representing the portion of the rate of the infection in the group of individuals with hidden infection status takes only finitely many values.
This significantly limits the setting as well as the possible applications to real world problems.

One may simplify the problem further by assuming that $\alpha(t)$ takes values in $\{0,1\}$. This would mean that at any given time all individuals in the hidden status group are either susceptible or infected.

\section{Ergodicty of Nonlinear Filter and Threshold for Permanence and Extinction}\label{sec:thres}
\subsection{Ergodicity of Nonlinear Filter}\label{sec:filter}
The study of the asymptotic properties of the nonlinear filter has a long history in the literature. We briefly summarize the developments. One of the first works is Kunita's paper \cite{Kun71}.
We restate the main result (Theorem 3.3) of this reference as follows.

\begin{pron}
$($Kunita 1971$)$
	Assume that the signal process $\alpha(t)$ taking values in a compact separable Hausdorff space is a Markov-Feller process with semigroup $P_t$ that has a unique invariant measure $\mu^*$ and
	$$
	\limsup_{t\to\infty}\int|P_tl(x)-\mu^*(l)|\mu(dx)=0,\forall l\in C([0,1]).
	$$
	Then the process $\Pi_t(\cdot)$ is an $\M$-valued Markov-Feller process that has unique invariant measure $\Phi^*$. Moreover, $\mu^*$ is the barycenter of $\Phi^*$, i.e.,
	$$
	\mu^*(l)=\int_{\M} \nu(l)\Phi^*(d\nu), \forall l\in C[0,1].
	$$
\end{pron}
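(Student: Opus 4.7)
The plan is to prove the statement in three parts: (i) the Markov-Feller property of $\Pi_t(\cdot)$ on $\M$, (ii) existence of an invariant measure $\Phi^*$, and (iii) uniqueness of $\Phi^*$ together with the barycenter identification $\overline{\Phi^*}=\mu^*$.

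For (i), the Markov property of $\Pi_t(\cdot)$ is inherited from the Kushner--FKK equation \eqref{eq-pi}: its coefficients depend only on the current state $\Pi_t$, and the innovation $\beta$ is a Brownian motion whose future increments are independent of $\F_t^y$. The Feller property---continuity of $\pi\mapsto\E_\pi[F(\Pi_t)]$ on $\M$ for every $F\in C(\M)$---is cleanest to obtain through the unnormalized (Zakai) filter, which is \emph{linear} in the initial measure, followed by normalization; alternatively, a Gronwall estimate can be applied directly to the nonlinear equation. For (ii), since $[0,1]$ is compact, Prokhorov's theorem makes $\M$ compact in the weak topology, so the time averages $\frac{1}{T}\int_0^T \text{Law}(\Pi_s)\,ds$ are automatically tight on $\M$; Krylov--Bogolyubov, combined with the Feller property, then produces an invariant measure $\Phi^*$.

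The barycenter identification is immediate from the tower property. Starting the filter at $\Pi_0\sim\Phi^*$ (with a coherent choice of $\alpha(0)$), so that $\text{Law}(\Pi_t)=\Phi^*$ for all $t\ge 0$, one has for every $l\in C[0,1]$
$$
\E[l(\alpha(t))]=\E\bigl[\E[l(\alpha(t))\mid \F_t^y]\bigr]=\E[\Pi_t(l)]=\int_\M \nu(l)\,\Phi^*(d\nu)=\overline{\Phi^*}(l).
$$
Since this expression is constant in $t$, $\overline{\Phi^*}$ is invariant under the signal semigroup $P_t$, and the uniqueness assumption on $\mu^*$ then forces $\overline{\Phi^*}=\mu^*$.

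The hard step, and the main obstacle, is the uniqueness of $\Phi^*$: one must promote ergodicity of the signal to ergodicity of the filter. The approach is a \emph{filter-stability} argument. Suppose $\Phi_1^*, \Phi_2^*$ are both invariant and build corresponding stationary versions of the filter on a common probability space driven by the same observation trajectory. The hypothesis $\int|P_tl(x)-\mu^*(l)|\mu(dx)\to 0$ says that the signal forgets its initial distribution in an $L^1$ sense, so $\E[l(\alpha(0))\mid \F_{[-t,0]}^y]$ converges by backward martingale convergence to a limit measurable with respect to the tail observation $\sigma$-algebra and independent of any prescription of the remote initial data. Translated back to the forward filter, this implies that $\text{Law}(\Pi_t)$ has a unique weak limit regardless of the starting distribution, ruling out the coexistence of two distinct invariant measures and yielding $\Phi_1^*=\Phi_2^*$.
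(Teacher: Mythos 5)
Your parts (i), (ii), and the barycenter identification are sound and follow the standard route: the Feller property via the (linear) Zakai equation plus normalization, Krylov--Bogolyubov on the compact space $\M$, and the tower property $\E[l(\alpha(t))]=\E[\Pi_t(l)]$ in stationarity. Note, however, that the paper does not prove this proposition at all --- it is quoted as Theorem 3.3 of \cite{Kun71}, and the surrounding discussion in Section \ref{sec:filter} exists precisely to warn the reader that the uniqueness part of Kunita's original proof is incomplete.

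The genuine gap in your proposal is exactly the step you describe as ``translated back to the forward filter.'' To pass from backward martingale convergence of the conditional expectations to the claim that $\mathrm{Law}(\Pi_t)$ has a unique limit independent of the initialization, one needs the identity \eqref{eq-identity}, i.e.\ the exchange of the decreasing intersection over $t$ with the supremum against $\F^y_{[0,\infty)}$. This exchange of $\bigcap$ and $\bigvee$ is not valid for general $\sigma$-algebras, is not implied by the hypotheses of the proposition, and is precisely the unproved step in \cite{Kun71}; the counterexample in \cite{BCL04} (with degenerate observations) is built on its failure. The identity was established in \cite{Han09} only under ergodicity of the signal \emph{together with} nondegeneracy of the observation --- assumptions mildly but strictly stronger than those stated here --- and whether Kunita's conditions alone suffice remains an open problem. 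So your uniqueness argument, as written, reproduces the known gap rather than closing it; to make it rigorous you must either prove \eqref{eq-identity} under the stated hypotheses (open) or add the nondegeneracy assumption and invoke Handel's theorem, which is what the paper does in its subsequent proposition under Assumption \ref{asp-2}.
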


Unfortunately, it was pointed out in \cite{BCL04} that there was a serious gap in the proof of the main result in \cite{Kun71}. A key role in the verification of the uniqueness for the invariant measure of $\Pi_t(\cdot)$ is the following identity
\begin{equation}\label{eq-identity}
\bigcap_{t\geq 0}\F_{[0,\infty)}^y\bigvee \sigma\{\alpha(s):s\geq t\}=\F_{[0,\infty)}^y\bigvee \left(\cap_{t\geq 0}\sigma\{\alpha(s):s\geq t\}\right),
\end{equation}
where $\F_{[0,\infty)}^y:=\bigvee_{t\geq 0}\F_t^y$.
This identity is indispensable in the proof of the uniqueness of the invariant measure of nonlinear filtering; see the counterexample given in \cite{BCL04}.
Moreover, the exchange of intersection and supremum is not always permitted in general\footnote{According to Williams \cite{Wil91}, this incorrect identity ``...tripped up even Kolmogorov and Wiener"; see \cite[p. 837]{Sin89}, and \cite[pp. 91--93]{Mas66}}.
However, in Kunita's proof, this identity was not proved.
On the other hand, it is important to note that all the known counterexamples are based on the degeneracy of the observation, i.e., there is no added noise.
Therefore, it was tempting to conjecture that the identity  \eqref{eq-identity} still holds provided the nondegeneracy of the observation.

In 2009, R. Handel \cite{Han09}  has
partially solved this open problem
 in a general setting.
In fact, \cite[Theorem 4.2]{Han09} proved that  identity \eqref{eq-identity} does indeed
hold under conditions of ergodicity of signal process \cite[Assumption 3.1]{Han09} and nondegeneracy of the observation process \cite[Assumption 3.2]{Han09}, which are only mildly stronger than those in \cite{Kun71}\footnote{According to Handel \cite{Han09}, whether Kunita's condition is already sufficient to guarantee uniqueness of the
	invariant measure with barycenter $\mu^*$ remains an open problem.}.
Finally, we state the following theorem on the ergodicity of the filter $\Pi_t(\cdot)$ under our setting and our assumption.

\begin{pron}
	Under Assumption \ref{asp-2}, the process $\Pi_t(\cdot)$ is an $\M$-valued Markov-Feller process and has a unique invariant measure $\Phi^*$. Moreover, $\mu^*$ is the barycenter of $\Phi^*$, i.e.,
	$$
	\mu^*(l)=\int_{\M} \nu(l)\Phi^*(d\nu), \forall l\in C[0,1].
	$$
\end{pron}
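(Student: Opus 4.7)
The plan is to deduce this proposition directly from the nonlinear filtering ergodicity theory developed in Kunita \cite{Kun71} and repaired by Handel \cite{Han09}. The statement consists of three assertions to verify in turn: that $\Pi_t(\cdot)$ is Markov--Feller on $\M$, that it admits a unique invariant measure $\Phi^*$, and that the barycenter of $\Phi^*$ is the signal's invariant measure $\mu^*$.

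First, I would establish the Markov--Feller property. Since $[0,1]$ is compact and $\alpha(t)$ is Markov--Feller by Assumption \ref{asp-2}, the pair $(\alpha(t),y(t))$ sits in the classical signal/observation framework with additive nondegenerate noise $W(t)$. The Markov property of $\Pi_t$ with respect to the observation filtration follows from the Kallianpur--Striebel representation together with the independence of the innovation process from future increments of $\F_t^y$ (see \cite{FFK72,Kun71}). The Feller property on $\M$ endowed with the weak topology comes from continuous dependence of the Zakai/FKK equation on the initial measure, using that $g$ is continuous on the compact set $[0,1]$ and therefore bounded.

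Second, for uniqueness of the invariant measure I would verify the hypotheses of \cite[Theorem 4.2]{Han09}. Assumption \ref{asp-2} asserts convergence in total variation of the signal semigroup to $\mu^*$, which is strictly stronger than the $L^1$-type convergence required by Kunita and so certainly implies Handel's ergodicity assumption. Nondegeneracy of the observation is immediate from $dy=g(\alpha)dt+dW$, with $W$ an independent standard Brownian motion and $g$ bounded continuous. With both hypotheses in force, \cite[Theorem 4.2]{Han09} supplies the identity
\begin{equation*}
\bigcap_{t\geq 0}\bigl(\F_{[0,\infty)}^y\bigvee \sigma\{\alpha(s):s\geq t\}\bigr)=\F_{[0,\infty)}^y\bigvee \Bigl(\bigcap_{t\geq 0}\sigma\{\alpha(s):s\geq t\}\Bigr),
\end{equation*}
which is precisely the gap flagged in \cite{BCL04}. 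Once this identity is available, Kunita's argument in \cite[Theorem 3.3]{Kun71} goes through verbatim and yields the existence of a unique invariant measure $\Phi^*$ for $\Pi_t(\cdot)$.

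Third, the barycenter identity follows from a short stationarity computation. Initializing with $\Pi_0\sim\Phi^*$ and $\alpha(0)\sim\mu^*$ (consistent with the filter), stationarity of both processes gives, for every $l\in C[0,1]$,
\begin{equation*}
\int_{\M}\nu(l)\,\Phi^*(d\nu)=\E[\Pi_t(l)]=\E\bigl[\E[l(\alpha(t))\mid\F_t^y]\bigr]=\E[l(\alpha(t))]=\mu^*(l),
\end{equation*}
which is the stated barycenter property. I expect the principal obstacle to be not these reductions but the bookkeeping required to translate Handel's hypotheses from their general Polish formulation into the present compact-interval setting and to align the continuous measurable modification of $\Pi_t(\cdot)$ guaranteed by \cite{Kun71} with the measurability needed in the identity above; the genuine analytic content has already been supplied in \cite{Han09,Kun71}, and our task is to observe that their hypotheses hold here.
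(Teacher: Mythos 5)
Your proposal is correct and follows essentially the same route as the paper, which states this proposition as a direct consequence of Kunita's theorem \cite{Kun71} once the exchange-of-$\sigma$-algebras identity \eqref{eq-identity} is supplied by \cite[Theorem 4.2]{Han09} under the total-variation ergodicity of Assumption \ref{asp-2} and the nondegeneracy of the additive-noise observation; the barycenter identity is likewise the standard tower-property computation. The paper offers no further proof beyond this reduction, so your write-up is a faithful expansion of its argument.
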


Moreover, let $\M_{\Phi^*}\subset\M$ be the support of the invariant measure $\Phi^*$ of the nonlinear filter $\Pi_t(\cdot)$. In general, one should not expect that $\M_{\Phi^*}=\M$. In fact, this does not hold even in the simple setting of the Wonham filter when the state space has only 3 states \cite[Section 4]{TW11}.

\subsection{Threshold for Permanence and Extinction}\label{sec:lambda}
We next use the ergodicity of the nonlinear filter developed in the previous section in conjunction with a Lyapunov exponent analysis (sometimes called invasion type analysis in population dynamics) coming from dynamical systems \cite{DANG,DN-JDE,HN18}. This allows us to introduce a threshold $\lambda$, which characterizes the longtime behavior of system \eqref{eq-main}.

Consider the equation on boundary when the infected individuals are absent, i.e.,
\begin{equation}\label{phi}
d \varphi(t)=\big(a_1-b_1\varphi(t)\big)dt+\sigma_1 \varphi(t)dB_1(t),\;\;\;\varphi(0)=u\geq 0.
\end{equation}
By solving the Fokker-Planck equation,   equation \eqref{phi}  has
a unique stationary distribution $\hat\mu$ with the density  given by
\begin{equation}\label{hpp}
\dfrac{b^a}{\Gamma(a)}y^{-(a+1)}e^{-\frac{b}y},\; y>0,
\end{equation}
where $c_1=b_1+\dfrac{\sigma_1^2}{2}, a=\dfrac{2c_1}{\sigma_1^2},b=\dfrac{2a_1}{\sigma_1^2} $ and
$\Gamma(\cdot) $ is the Gamma function.
The main idea is to determine whether $I(t)$ converges to 0 or not by
looking at the Lyapunov exponent $\limsup_{t\to\infty} \dfrac{\ln I(t)}t$ when $I(t)$ is small. Using It\^{o}'s formula yields
\begin{equation}\label{Lia}
\begin{aligned}
\dfrac{\ln I(t)}{t}=&\dfrac{\ln v}{t}+ \dfrac {\sigma_2B_2(t)}t-c_2+\dfrac 1{t}\int_0^t\int_0^1f(x,S(s),I(s))\Pi_s(dx)ds\\
&+\frac 1t\int_0^t \int_0^1xh(x,S(s),I(s))\Pi_s(dx)ds,
\end{aligned}
\end{equation}
where $c_2=b_2+\dfrac{\sigma_2^2}2.$
Intuitively, $\limsup_{t\to\infty} \frac{\ln I(t)}t<0$ implies $\lim_{t\to\infty} I(t)=0$. As a result, if $I(t)$ is small then $S(t)$ is close to $\varphi(t)$ provided $S(0)=\varphi(0)$. Therefore, when $t$ is sufficiently large we have
\begin{equation*}
\begin{aligned}
\dfrac 1{t}&\int_0^t\int_0^1f(x,S(s),I(s))\Pi_s(dx)ds+\frac 1t\int_0^t \int_0^1xh(x,S(s),I(s))\Pi_s(dx)ds\\
&\approx \dfrac 1{t}\int_0^t\int_0^1f(x,\varphi(s),0)\Pi_s(dx)ds+\frac 1t\int_0^t \int_0^1xh(x,\varphi(s),0)\Pi_s(dx)ds.
\end{aligned}
\end{equation*}
By the strong law of large numbers for $\varphi(t)$ and $\Pi_t$ from \eqref{Lia}, we obtain that the Lyapunov exponent of $I_{u,v}(t)$  can be
approximated by
\begin{equation}\label{ld}
-c_2+ \int_0^{\infty}\int_{\M} \left(f(x,y,0)\nu(dx)\right)\Phi^*(d\nu)\hat\mu(dy)+\int_0^\infty\int_{\M} \left(\int_0^1 xh(x,y,0)\nu(dx)\right)\Phi^*(d\nu)\hat\mu(dy).
\end{equation}
Since $\mu^*$ is the barycenter of $\Phi^*$, the Lyapunov exponent of $I(t)$ is approximated by
$$
-c_2+ \int_0^{\infty}\int_0^1f(x,y,0)\mu^*(dx)\hat\mu(dy)+\int_0^\infty\int_0^1 xh(x,y,0)\mu^*(dx)\hat\mu(y).
$$
Therefore, we define the threshold $\lambda$ by
\begin{equation}\label{eq-lambda}
\lambda:=-c_2+ \int_0^{\infty}\int_0^1f(x,y,0)\mu^*(dx)\hat\mu(dy)+\int_0^\infty\int_0^1 xh(x,y,0)\mu^*(dx)\hat\mu(dy).
\end{equation}
In the next section, we
prove that the sign of $\lambda$
 characterizes the longtime behavior of the system \eqref{eq-main}.
It is also noted that when $\alpha(t)$ is available,
so is \eqref{eq-main-1}, and the permanence or extinction of \eqref{eq-main-1} is also determined by the sign of $\lambda$ defined in \eqref{eq-lambda} (see e.g., \cite{DNY-SPA}).


\section{Characterization of Longtime Properties: Permanence and Extinction}\label{sec:lon}
\subsection{The existence and uniqueness of the solution and preliminary results}
We begin with the following theorem on the existence and uniqueness of the solution of \eqref{eq-main}
and then proceed with a complete characterization of its positivity and some other important properties.

\begin{thm}\label{thm-exi}
	For any $(u, v,\pi)\in\R_+^2\times\M$, there exists a unique global solution
to
system \eqref{eq-main} with initial value $(u,v,\pi)$.
	The three-component process $\{(S(t), I(t),\Pi_t), t\geq0\}$
	is a Markov process.
\end{thm}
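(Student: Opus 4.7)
The plan is to decouple the problem: the nonlinear filter $\Pi_t$ is constructed first on the observation space, using the theory cited in Section \ref{sec:filter}, so it exists as a global $\F_t^y$--adapted $\M$--valued Markov--Feller process independently of $(S,I)$. Once $\Pi_t$ is available, the $(S,I)$-equations in \eqref{eq-main} become a two-dimensional SDE with random coefficients that are $\F_t^y$--adapted in $\pi$ and driven by the independent Brownian motions $B_1,B_2$. The work then splits into three tasks: (i) local existence and uniqueness of $(S,I)$ by a standard truncation/Lipschitz argument, using that the integrand coefficients are uniformly Lipschitz in $(s,i)$ across $\pi\in\M$; (ii) non-explosion via a Lyapunov estimate; and (iii) verifying that the triple $(S(t),I(t),\Pi_t)$ inherits the Markov property from the joint SDE structure.

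For (i), observe that by Assumption \ref{asp-1} the map $(s,i)\mapsto \int_0^1 f(x,s,i)\pi(dx)$ and $(s,i)\mapsto \int_0^1 xh(x,s,i)\pi(dx)$ are Lipschitz with constants bounded by $L_1$ and $L_2$ uniformly in $\pi\in\M$, and vanish at $s=0$, giving linear growth in $s$. Combined with the linear $a_1-b_1s$ and $-b_2i$ terms, the drift and diffusion of the $(S,I)$-system are locally Lipschitz with at most quadratic growth (due to the $If$ and $Ih$ products). Introducing the exit times $\tau_n:=\inf\{t\geq 0: S(t)+I(t)\geq n\}$, a standard Picard iteration produces a unique strong solution on $[0,\tau_n]$. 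For (ii), note the crucial cancellation: the $f$ and $h$ integrals appear with opposite signs in $dS$ and $dI$, so
\begin{equation*}
d(S+I)(t)=\bigl[a_1-b_1S(t)-b_2I(t)\bigr]dt+\sigma_1 S(t)\,dB_1(t)+\sigma_2 I(t)\,dB_2(t).
\end{equation*}
Taking expectations at $t\wedge\tau_n$ yields $\Ei[(S+I)(t\wedge\tau_n)]\leq u+v+a_1 t$. Chebyshev then gives $n\PP(\tau_n\leq t)\leq u+v+a_1 t$, so $\tau_n\uparrow\infty$ a.s., which proves global existence. Positivity on $\R_+^{2,\circ}$ for initial data in $\R_+^{2,\circ}$ comes from applying It\^o's formula to $\ln S(t)$ and $\ln I(t)$ up to the stopping times $\inf\{t:S(t)\vee I(t)^{-1}\leq 1/n\}$: using $f(x,0,i)=h(x,0,i)=0$ and the Lipschitz property, the ratios $f/S$ and $h/S$ stay bounded near the $S$-axis, while the $\ln I$-dynamics has only bounded coefficients; standard moment estimates then rule out hitting zero in finite time. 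The boundary cases $u=0$ or $v=0$ are handled directly: if $v=0$ then $I\equiv 0$ and $S=\varphi$ from \eqref{phi}, while $u=0$ produces an immediate positive drift $a_1$.

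For (iii), the combined system $(S,I,\Pi)$ satisfies a time-homogeneous SDE in the Polish state space $\R_+^2\times \M$: the $(S,I)$-part is driven by $(B_1,B_2)$ with coefficients that are measurable functions of the current $(S,I,\Pi)$, while the filter evolves by \eqref{eq-pi} driven by the innovation $\beta$ with coefficients depending only on $\Pi$. Since $(B_1,B_2,\beta)$ are mutually independent (as $\beta$ is $\sigma(\alpha,W)$--measurable, independent of $(B_1,B_2)$) and the joint solution is pathwise unique by parts (i)--(ii) and the established well-posedness of \eqref{eq-pi}, the flow property holds and $(S(t),I(t),\Pi_t)$ is Markov in $\R_+^2\times\M$. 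The main obstacle I anticipate is formal verification of the Markov property in the infinite-dimensional filter component; this is resolved by invoking the Markov--Feller property of $\Pi_t$ from the Kunita--Handel result stated in Section \ref{sec:filter} and combining it with strong uniqueness for the coupled SDE via the independence of the driving noises.
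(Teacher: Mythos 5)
Your proposal is correct and follows essentially the same route as the paper: local existence and uniqueness from the local Lipschitz property of the coefficients, non-explosion via the Lyapunov function $V_1(s,i)=s+i$ exploiting the cancellation of the incidence terms (yielding $\Lom V_1\leq a_1$ and the Chebyshev/Markov inequality bound on the exit times), and the Markov property of the triple from the Markov property of $\Pi_t$ together with the independence of the innovation process $\beta$ from $B_1,B_2$. The positivity discussion you include is not needed for this theorem (it is the content of the subsequent Theorem \ref{thm-markov}), but it does not detract from the argument.
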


\begin{proof}
	We prove the existence and uniqueness of the solution of \eqref{eq-main} first.
It is noted that although we have assumed $f(s,i)$ is Lipschitz continuous, the coefficient $if(s,i)$ in the system \eqref{eq-main} is not globally Lipschitz in general.
Since the coefficients of the equation are locally Lipschitz continuous, there is a unique solution $(S(t),I(t))$ with the  initial value $(u,v,\pi)\in \R_+^{2}\times\M$, defined on maximal interval  $t\in [0,\tau_e)$, $\tau_e:=\inf\{t\geq 0: S(t)\vee I(t)=\infty\}$ with the convention $\inf\emptyset=\infty$; see e.g., \cite[Theorem 3.8 and Remark 3.10]{MAO06}.
We need to show $\tau_e=\infty$ a.s.
   If we define
$$\tau_k=\inf\Big\{t\geq 0: S(t)\vee I(t)>k\Big\},$$
then $\tau_e=\lim_{k\to\infty}\tau_k$.
Consider $V_1(s,i)=s+i$, then
 we have from definition of $\Lom V_1$
 that
$$
\begin{aligned}
\Lom V_1(s,i,\pi)&=a_1-b_1s-b_2i
\leq a_1\;\forall (s,i,\pi)\in\R^2_+\times\M.
\end{aligned}
$$
Hence, by applying It\^o's formula and taking expectation, we obtain
$$
\begin{aligned}
\Ei V_1\big(S(\tau_k\wedge t), I(\tau_k\wedge t)\big)
\leq V_1(u,v)+a_1t,
\end{aligned}
$$
which together with Markov's inequality implies that
$$
\begin{aligned}
\PPi\big\{\tau_k<t\big\}&\leq \PPi\Big\{ V_1\big(S(\tau_k\wedge t), I(\tau_k\wedge t),\alpha(\tau_k\wedge t)\big)\geq k\Big\}
\\&\leq \dfrac{ V_1(u,v)+a_1t}{k}\to 0\;\text{as}\;k\to\infty.
\end{aligned}
$$
Therefore, we have $\PPi\{\tau_e\leq t\}=0$ or $\PPi\{\tau_e>t\}=1$ for all $t>0$. As a consequence, $\PPi\{\tau_e=\infty\}=1$. Hence,  system \eqref{eq-main} has a unique, global, and continuous solution.

We proceed
 to prove the Markov property. Since $\Pi_t$ is a Markov process and is independent of $B_1(t)$  and $B_2(t)$, the Markov property of the joint process $(S(t),I(t),\Pi_t)$ follows by standard arguments; see
for example,
\cite[Theorem 3.27 and Lemma 3.2]{MAO06} or \cite[Lemma 6.1]{FFK72}. To see why the argument in \cite{MAO06} can be applied,
note that $\Pi_t$ satisfies the stochastic equation \eqref{eq-pi} driven by $\beta(t)$ and the $\sigma$-algebra generated by increments $\{\beta(t_2)-\beta(t_1):t_2\geq t_1\geq t\}$ is independent of $\F_t^y$.
\end{proof}

Next, using Lyapunov functions, we estimate the moments of $S(t)$  and $I(t)$, and obtain some related results. Define $\sigma_*^2:=\max\{\sigma_1^2,\sigma_2^2\}$.

\begin{lem}\label{lem2.1}
The following assertions hold:
\begin{itemize}
\item[{\rm(i)}] For any $0<p<\frac{2\kappa}{\sigma_*^2}$ there is a constant $Q_1$ such that
$$\limsup\limits_{t\to\infty}\Ei (S(t)+ I(t))^{1+p}\leq Q_1\;\; \forall (u,v,\pi) \in \R_+^2\times\M.$$
\item[{\rm(ii)}] For any $\eps>0$, $H>1$, $T>0$, there is $\bar H =\bar H(\eps, H, T)$ such that
$$\PPi \left\{\dfrac{1}{\bar H}  \leq S(t)
 \leq \bar H, \;\;\forall t \in [0,T] \right\}\geq 1-\eps \;\; \text{if}\;\;\; (u,v,\pi) \in [H^{-1},H]\times [0;H]\times\M,$$
and
$$\PPi\{ 0 \leq S(t), I(t) \leq \bar H, \;\;\forall t \in [0,T] \}\geq 1-\eps \;\; \text{if}\;\;\; (u,v,\pi) \in [0,H]\times [0;H]\times\M.$$
\end{itemize}
\end{lem}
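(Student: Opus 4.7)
The plan is to prove (i) by a Lyapunov-function argument and then derive (ii) from stopping-time/Markov-inequality estimates together with a logarithmic transformation for the lower bound.

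For part (i), I will use the Lyapunov function $V(s,i)=(s+i)^{1+p}$. The key algebraic observation is that in $\Lom V$, since $\partial_s V = \partial_i V = (1+p)(s+i)^{p}$, the infection terms $\pm I f(\cdot) \mp \alpha I h(\cdot)$ (integrated against $\Pi_t$) cancel identically. One is left with
\[
\Lom V[s,i,\pi] = (1+p)(s+i)^{p}\bigl(a_1 - b_1 s - b_2 i\bigr) + \tfrac{p(1+p)}{2}(s+i)^{p-1}\bigl(\sigma_1^2 s^2 + \sigma_2^2 i^2\bigr).
\]
Bounding $a_1 - b_1 s - b_2 i \leq a_1 - \kappa(s+i)$ with $\kappa := b_1\wedge b_2$ and $\sigma_1^2 s^2+\sigma_2^2 i^2 \leq \sigma_*^2 (s+i)^2$, one gets $\Lom V \leq (1+p)a_1(s+i)^{p} - (1+p)\bigl(\kappa-\tfrac{p\sigma_*^2}{2}\bigr)(s+i)^{p+1}$. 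For $p<2\kappa/\sigma_*^2$ the second coefficient is strictly positive, and Young's inequality converts this to $\Lom V \leq C_0 - \gamma V$ for suitable $C_0,\gamma>0$. Localizing with $\tau_k$, applying It\^o's formula, taking expectations, and using Gronwall together with Fatou yields $\Ei V(S(t),I(t)) \leq V(u,v)e^{-\gamma t} + C_0/\gamma$, giving $Q_1 := 2 C_0/\gamma$.

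For part (ii), the upper bound follows the scheme already used in the proof of Theorem \ref{thm-exi}. Define $\tau_M := \inf\{t\geq 0 : S(t)+I(t)>M\}$. Using $V_1(s,i)=s+i$ with $\Lom V_1 \leq a_1$, I obtain $\Ei V_1(S(t\wedge\tau_M),I(t\wedge\tau_M))\leq V_1(u,v)+a_1 t$. Since $V_1(S(\tau_M),I(\tau_M))=M$ on $\{\tau_M\leq T\}$, Markov's inequality gives
\[
\PPi(\tau_M \leq T) \leq \frac{V_1(u,v)+a_1 T}{M} \leq \frac{2H+a_1 T}{M}.
\]
Choosing $M = M(\eps,H,T) := 2(2H+a_1 T)/\eps$ ensures $\PPi(\sup_{t\leq T}(S(t)+I(t))\leq M) \geq 1 - \eps/2$, which handles the pathwise upper bound on both $S$ and $I$.

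For the lower bound on $S$, I will work on the event $\Omega_M := \{\sup_{t\leq T}(S(t)+I(t))\leq M\}$. Applying It\^o's formula to $\ln S(t)$ and exploiting Assumption \ref{asp-1} — specifically $f(x,0,i)=h(x,0,i)=0$ combined with Lipschitz continuity, which yields $f(x,s,i)\leq L_1 s$ and $h(x,s,i)\leq L_2 s$ — one gets
\[
d\ln S(t) \geq \Bigl[-b_1 - L_1 I(t) - L_2 I(t) - \tfrac{\sigma_1^2}{2}\Bigr]\,dt + \sigma_1\, dB_1(t),
\]
where the helpful $a_1/S$ term has been discarded. On $\Omega_M$ this drift is bounded below by $-K$ for $K := b_1 + (L_1+L_2)M + \sigma_1^2/2$. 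Hence on $\Omega_M$, $\ln S(t) \geq \ln u - KT + \sigma_1 B_1(t) \geq -\ln H - KT + \sigma_1 \inf_{s\leq T}B_1(s)$. Using Doob's maximal inequality (or explicit Gaussian estimates) to pick $N$ with $\PP(\inf_{s\leq T}B_1(s) < -N) < \eps/2$, and setting $\bar H := \max\{M,\, H e^{KT + \sigma_1 N}\}$, I conclude $\PPi(\inf_{t\leq T}S(t)\geq 1/\bar H \text{ and } \sup_{t\leq T}(S(t)\vee I(t))\leq \bar H) \geq 1-\eps$.

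The main obstacle is the lower bound on $S$, because the diffusion coefficient vanishes at $0$ and the drift term $-If(\cdot)/S$ is only controlled once the upper bound on $I$ is in hand; this is why the two estimates must be combined, first proving the upper bound event $\Omega_M$ and then working pathwise within it. The key ingredient that makes the argument go through cleanly is the vanishing of $f$ and $h$ at $s=0$ together with their Lipschitz regularity, which ensures $f/S$ and $h/S$ are uniformly bounded on any compact set.
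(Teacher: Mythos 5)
Your proof of part (i) is essentially identical to the paper's: the same Lyapunov function $(s+i)^{1+p}$, the same cancellation of the incidence terms between the two equations, and the same differential inequality $\Lom V\leq C_0-\gamma V$ (the paper invokes \cite[Theorem 5.2]{MAO} where you use Gronwall--Fatou, but these are the same estimate, and your explicit $\kappa=b_1\wedge b_2$ correctly repairs an evident sign typo in the paper's displayed bound). For part (ii) the paper merely cites ``standard arguments'' from \cite[Lemma 2.1]{DN-JDE}, and your write-up is exactly that standard argument and is correct, with the single small caveat that applying It\^o's formula to $\ln S$ presupposes $S(t)>0$, which is only established later in Theorem \ref{thm-markov}; one should therefore first localize with $\eta_k=\inf\{t\geq 0: S(t)<1/k\}$ and let $k\to\infty$.
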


\begin{proof}
Consider the Lyapunov function
$V_3(s,i)=(s+i)^{1+p}.$
By directly calculation with
the differential operator $\Lom V_3$ and using Assumption \ref{asp-1}, we obtain
\begin{align*}
\Lom V_3(s,i,\pi)=&(1+p)(s+i)^p(a_1-b_1s-b_2i)
+\frac {p(1+p)}2(s+i)^{p-1}\Big( \sigma_1^2s^2+\sigma_2^2i^2\Big)\\
\leq &-(1+p)(s+i)^{p-1}\Big[ -\frac p2 \sigma_*^2(s+i)^2-a_1(s+i)\big],\;\forall (s,i,\pi)\in\R^2_+\times\M.
\end{align*}
Let
$0<C_4<\frac{p(1+p)}2\sigma_*^2$. By some standard calculations, we get
$$C_5=\sup_{(s,i,\pi)\in \R^2_+\times\M}\left\{\Lom V_3(s,i,\pi)+C_4V_3(s,i)\right\}<\infty.$$
This implies
\begin{equation}\label{10}
\Lom V_3\leq C_5-C_4 V_3.
\end{equation}
Applying \cite[Theorem 5.2, p.157]{MAO} proves part (i) of the lemma. The proof of part (ii) follows
from
part (i) and standard arguments;
see \cite[Lemma 2.1]{DN-JDE}.

\end{proof}

\begin{thm}\label{thm-markov}
	The
process $(S(t),I(t),\Pi_t)$ is a strong Markov and Feller process.
	Moreover, we have
	$\PPi\{S(t)>0, t>0\}=1$ and
	$\PP_{u,0,\pi}\{I(t)=0, t>0\}=1$,
	$\PPi\{I(t)>0, t>0\}=1$ provided $v>0$.
\end{thm}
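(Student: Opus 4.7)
The plan is as follows. Theorem \ref{thm-exi} already gives the Markov property, so the remaining tasks are (a) strong Markov / Feller, and (b) the three sign statements for $S$ and $I$. Once the Feller property is established, the strong Markov property follows from Dynkin's theorem because the triple $(S(t),I(t),\Pi_t)$ has continuous sample paths; the sign results will then be handled by classical It\^o-on-$\ln$ arguments that exploit the Lipschitz structure of Assumption \ref{asp-1}.

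For the Feller property I will check that for any bounded continuous $\phi$ on $\R_+^2\times\M$, the map $(u,v,\pi)\mapsto\Ei\phi(S(t),I(t),\Pi_t)$ is continuous. Continuity in $(u,v)$ at fixed $\pi$ is the usual continuous-dependence argument: couple two solutions of \eqref{eq-main} driven by the same $B_1,B_2$, localize by a first-exit-from-compact stopping time whose failure probability is controlled by Lemma \ref{lem2.1}(ii), and close via Gronwall using the Lipschitz bounds on $f$ and $h$. Continuity in $\pi$ is the key new piece: the filter $\Pi_s$ enters the coefficients of \eqref{eq-main} only through the integrals $\int f(x,s,i)\pi(dx)$ and $\int xh(x,s,i)\pi(dx)$ of functions continuous in $x$. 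Combining the Markov-Feller property of $\Pi_t$ alone (restated in Section \ref{sec:filter} via Kunita's theorem) with the $L_1,L_2$-Lipschitz control of the integrands yields uniform-on-compacts convergence of the coefficients, and a second Gronwall estimate then transfers $\pi_n\to\pi$ into $(S^{(n)},I^{(n)})\to(S,I)$ in distribution.

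For positivity of $S$ with $u>0$, set $\tau_n:=\inf\{t\geq 0:S(t)\leq 1/n\}$ and apply It\^o to $\ln S$ on $[0,t\wedge\tau_n]$. The drift contains the \emph{positive} term $a_1/S$ and negative contributions of the form $If(x,S,I)/S$ and $\alpha I h(x,S,I)/S$. Since $f(x,0,i)=h(x,0,i)=0$ and both are Lipschitz in $s$, we have $f(x,s,i)\leq L_1 s$ and $h(x,s,i)\leq L_2 s$, so those negative terms are dominated by $(L_1+L_2)I(s)$. Taking expectations, discarding the nonnegative $a_1/S$ term, and invoking the moment bound $\sup_{t\leq T}\Ei I(t)<\infty$ from Lemma \ref{lem2.1}(i) yields
\begin{equation*}
\Ei\ln S(t\wedge\tau_n)\geq \ln u - C(T)
\end{equation*}
uniformly in $n$. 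Since $\ln S(\tau_n)=-\ln n$ on $\{\tau_n\leq t\}$ and $\ln S$ is controlled from above by Lemma \ref{lem2.1}(i), this forces $\PPi(\tau_n\leq t)\to 0$ and hence $S(t)>0$ for all $t>0$ a.s. The case $u=0$ follows from the strong Markov property just established together with the observation that $S\equiv 0$ on any positive-length interval is inconsistent with \eqref{eq-main}, whose drift reduces to $a_1\,dt>0$ there; thus $S$ leaves $0$ immediately.

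For $I$: if $v=0$ both the drift and diffusion of the $I$-equation factor through $I$, so $I\equiv 0$ solves the equation and uniqueness from Theorem \ref{thm-exi} yields $\PP_{u,0,\pi}\{I(t)=0\ \forall t\}=1$. If $v>0$, applying It\^o to $\ln I$ on $[0,t\wedge\sigma_n]$ with $\sigma_n:=\inf\{t:I(t)\leq 1/n\}$ gives a drift bounded below by $-b_2-\sigma_2^2/2$, because $\int f\,\Pi_s(dx)$ and $\int xh\,\Pi_s(dx)$ are nonnegative; hence $\ln I(t\wedge\sigma_n)\geq\ln v-(b_2+\sigma_2^2/2)t+\sigma_2 B_2(t\wedge\sigma_n)$, which is a.s.\ finite on any bounded interval, forcing $\sigma_n\to\infty$ and $I(t)>0$ for all $t>0$. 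The principal obstacle I anticipate is the $\pi$-continuity step in the Feller argument: weak convergence on $\M$ is a coarse notion, and converting it to uniform-on-compacts continuity of the SDE coefficients while simultaneously coupling filter and state is the delicate part of the proof; everything else reduces to controlled variants of techniques already used in Lemma \ref{lem2.1}.
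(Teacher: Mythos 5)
Your proposal is correct and reaches every conclusion of the theorem, but it takes a somewhat different route from the paper at each step, and one step needs patching. For the Feller/strong Markov part the paper does not run your coupling--Gronwall argument: it metrizes $\M$ with the bounded-Lipschitz metric, observes that the coefficients of \eqref{eq-main} are then locally Lipschitz in $(s,i,\pi)$ jointly (precisely because $f$ and $h$ are Lipschitz in $x$ and enter only through $\pi(f(\cdot,s,i))$ and $\pi(xh(\cdot,s,i))$), and invokes the truncation result \cite[Theorem 5.1]{NYZ17} together with Lemma \ref{lem2.1}; this is exactly the resolution of the ``$\pi$-continuity'' obstacle you flag, so your concern is well placed but is already absorbed by the choice of metric. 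For positivity with $u,v>0$ the paper uses the single Lyapunov function $V_2(s,i)=(s-1-\ln s)+(i-1-\ln i)$ and a Markov-inequality bound on $\eta_k=\inf\{t:S(t)\wedge I(t)<1/k\}$, whereas you treat $\ln S$ in expectation and $\ln I$ pathwise; both work, and your pathwise bound $\ln I(t\wedge\sigma_n)\ge \ln v-c_2t+\sigma_2B_2(t\wedge\sigma_n)$ is arguably cleaner than the paper's. The genuinely thin spot is the case $u=0$: the observation that ``$S\equiv 0$ on a positive-length interval is inconsistent with the drift $a_1\,dt$'' only shows $S$ does not vanish identically near $t=0$; it does not by itself give an initial interval on which $S$ is strictly positive, which is what you need before the strong Markov property can hand the problem back to the $u>0$ case (the natural stopping time $\inf\{t>0:S(t)>0\}$ restarts the process from $0$ again). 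The paper closes this by choosing $\eps$ so that the full drift of $S$ exceeds $a_1/2$ on $\{\tilde u+|\tilde v-v|<\eps\}$ and applying the variation-of-constants formula to $dS=G(t)\,dt+\sigma_1 S\,dB_1$, which yields $S(t)>0$ on $(0,\tilde\tau_1]$; alternatively you could apply the strong Markov property at the hitting times $\rho_n=\inf\{t:S(t)\ge 1/n\}$ after showing $\inf_n\rho_n=0$ a.s. Either repair is routine, but as written the step is an assertion rather than a proof.
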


\begin{proof}
It is easily seen that
the solution of \eqref{eq-main} is a homogeneous strong Markov and Feller process provided that the coefficients are globally
Lipschitz; see e.g., \cite[Theorem 2.9.3]{MAO} and \cite[Section 2.5]{YIN}.
It is noted that the space $\M$ of probability measures in $[0,1]$ endowed with the weak topology can be metricized by the bounded Lipschitz metric defined by
$$
\|\pi_1-\pi_2\|_{\text{BL}}:=\sup\left\{|\pi_1(l)-\pi_2(l)|: \|l\|\leq 1, \sup_{x\neq y\in[0,1]}\frac{|l(x)-l(y)|}{|x-y|}\leq 1\right\}.
$$

Therefore, by using the results in Lemma \ref{lem2.1}, we obtain from the local Lipschitz property of coefficients of \eqref{eq-main} and a truncation
argument that $(S(t),I(t),\Pi_t)$ is a homogeneous strong Markov and Feller process.
The details of this truncated argument and this result can be found in \cite[Theorem 5.1]{NYZ17}.

Next, we establish the positivity of solutions. First, suppose that $u,v>0$.
Let us consider  the Lyapunov function $V_2: \R_+^{2}\rightarrow \R_+$
$$V_2(s,i)=\left(s-1-\ln s\right)+(i-1-\ln i).
$$
By direct calculations, we have
\begin{equation*}
\begin{aligned}
\Lom V_2(s,i,\pi)&=\left(1-\frac {1}{s}\right)\left(a_1-b_1s-if(x,s,i)-i\int_0^1xh(x,s,i)\pi(dx)\right)+ \frac {\sigma_1^2s^2}{2s^2}
\\
&\;\;\;+\left(1-\frac 1{i}\right)\left(-b_2i+if(x,s,i)+i\int_0^1xh(x,s,i)\pi(dx)\right) +\frac {\sigma_2^2i^2}{2i^2}.
\end{aligned}
\end{equation*}
It follows from Assumption \ref{asp-1} that
$f(x,s,i)=\abs{f(x,s,i)-f(x,0,i)}\leq L_1s$ and
$h(x,s,i)=\abs{h(x,s,i)-h(x,0,i)}\leq L_2s$.
Therefore, it is easily seen that
\begin{equation*}
\begin{aligned}
\Lom V_2(s,i)
&\leq C_1+\frac{f(s,i)i}{s} +\frac{i\int_0^1 h(x,s,i)\pi(dx)}s
\\&\leq C_1+(L_1+L_2)(s+i),
\end{aligned}
\end{equation*}
where $C_1= a_1+ b_1+\frac{\sigma_1^2}2+b_2+\frac{\sigma_2^2}2.$
As a result, if we let $C_2=L_1+L_2+1$ and $C_3=C_1+2C_2\ln C_2+2C_2$ then
\begin{equation}\label{eq-V2}
\begin{aligned}
\Lom V_2(s,i,\pi)-C_2V(s,i)&\leq C_1-s-i+C_2(\ln s+\ln i)+2C_2\\
&\leq C_1+2C_2\ln C_2+2C_2=C_3.
\end{aligned}
\end{equation}
For $k>1$, denote
$$\eta=\inf\big\{t\geq 0: S(t)\wedge I(t)\leq 0\big\},$$
$$\eta_k=\inf\Big\{t\geq 0: S(t)\wedge I(t)<\dfrac 1k\Big\}.$$
Then $\eta=\lim_{k\to\infty}\eta_k$.
Therefore, by using the same argument as above, we obtain from \eqref{eq-V2} that
\begin{equation*}
\begin{aligned}
\PPi\{\eta_k<t\}\leq \dfrac{ V_2(u,v)+C_3t}{e^{-C_2t}(\ln k-1)}\to 0\;\text{as}\;k\to \infty.
\end{aligned}
\end{equation*}
As a result, $\PPi\{\eta_\infty=\infty\}=1$. This implies that
\begin{equation}\label{e2-thm2.1}
\PPi\big\{S(t)>0: t>0 \big\}=\PPi\big\{I(t)>0: t>0 \big\}=1\;\forall u,v>0.
\end{equation}
If $u>0,v=0$, the result $\PPi\big\{S(t)>0: t>0 \big\}=1$ can be shown similarly. Moreover, it is obvious that $\PP_{u,0,\pi}\big\{I(t)=0: t>0 \big\}=1$.

We are in a position
to consider the case $u=0$ and $ v\geq 0$ and prove the positivity of $S(t)$.
Let $\eps>0$ be sufficiently small such that
\begin{equation}\label{e3-thm2.1}a_1-b_1\tilde u-\tilde v\sup_{x\in[0,1]}\big(f(x,\tilde u,\tilde v)+h(x,\tilde u,\tilde v)\big)\geq \frac{a_1}{2},
\end{equation}
for any $(\tilde u,\tilde v)\in\R^2$ satisfying
$\tilde u+|\tilde v-v|<\eps$.
Such an $\eps$ exists due to Assumption \ref{asp-1}.
Set $$\tilde\tau_1=\inf\{t>0:S(t)+|I(t)-v|\geq\eps\}.$$
By the continuity of $(S(t), I(t))$ it is clear that $\PP_{0,v,\pi}\{\tilde\tau_1>0\}=1$.
It follows
from \eqref{e3-thm2.1} that
$$a_1-b_1S(t)-I(t)\int_0^1f(x,S(t),I(t))\Pi_t(dx)+I(t)\int_0^1xh(x,S(t),I(t))\Pi_t(dx)>0\;\text{if}\;t\in(0,\tilde\tau_1].$$
This and the variation of constants formula (see \cite[Chapter 3]{MAO})
imply that
$$\PP_{0,v,\pi}\{S(t)>0, t\in(0,\tilde\tau_1]\}=1,$$
which combined with \eqref{e2-thm2.1}
and the strong Markov property of $(S(t),X(t),\Pi(t))$
yields that
\begin{equation*}
\PP_{0,v,\pi}\{S(t)>0, t\in(0,\infty)\}=1.
\end{equation*}
The theorem is therefore proved.
\end{proof}

\subsection{Extinction}\label{secext}
Consider the case
$\lambda<0$.
We shall show that the number of the infected individuals $I(t)$ tends to zero at an exponential rate while the number of the susceptible individuals $S(t)$ converges to $\varphi(t)$.

\begin{thm}\label{R<1}
Assume that $\lambda<0$. Then for any initial point $(u,v,\pi)\in\mathbb{R}_+^{2,\circ}\times\M$,
the number of the infected individuals $I(t)$ tends to zero  at an exponential rate, i.e., 
$$\PPi\left\{\limsup\limits_{t\to\infty}\dfrac{\ln I(t)}{t}= \lambda\right\}=1,$$
and the susceptible class $S(t)$ converges weakly to the solution $\varphi(t)$ on the boundary.
\end{thm}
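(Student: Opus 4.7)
I follow the Lyapunov-exponent heuristic of Section~\ref{sec:lambda}. Applying It\^o's formula to $\ln I(t)$ as in \eqref{Lia} gives
\[
\frac{\ln I(t)}{t} = \frac{\ln v + \sigma_2 B_2(t)}{t} - c_2 + \frac{1}{t}\int_0^t \Psi(s)\,ds, \qquad \Psi(s) := \int_0^1 \bigl[f(x,S,I)+xh(x,S,I)\bigr]\Pi_s(dx).
\]
The $(\ln v)/t$ term and the martingale $\sigma_2 B_2(t)/t$ vanish a.s., so the task is to identify the limit of $t^{-1}\int_0^t \Psi(s)\,ds$ with $\lambda + c_2$, by replacing the trajectory $(S,I)$ by the boundary data $(\varphi,0)$ modulo Lipschitz errors and invoking ergodicity of $(\varphi,\Pi_t)$.

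\textbf{Coupling, Lipschitz estimates, and boundary ergodic limit.} Couple $\varphi$ in \eqref{phi} to $S$ via the same Brownian motion $B_1$ and the same initial condition $u$. Pathwise comparison then yields $0\le S(t)\le \varphi(t)$ a.s.\ for all $t\ge 0$, because the drift of $S$ equals that of $\varphi$ minus the non-negative quantity $I\!\int_0^1 [f+xh]\Pi$. Combining $S\le\varphi$, the monotonicity of $f(x,\cdot,0)$ and $h(x,\cdot,0)$, and the Lipschitz continuity of $f,h$ in the $i$-variable gives
\[
\Psi_0(s) - (L_1+L_2)\bigl(\varphi(s)-S(s)+I(s)\bigr) \;\le\; \Psi(s) \;\le\; \Psi_0(s) + (L_1+L_2)\,I(s),
\]
where $\Psi_0(s):=\int_0^1 [f(x,\varphi(s),0)+xh(x,\varphi(s),0)]\Pi_s(dx)$. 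The joint process $(\varphi,\Pi_t)$ is Markov-Feller on $\R_+\!\times\!\M$, with components driven by independent noises ($B_1$ versus $\alpha,W$), so its unique invariant measure is $\hat\mu\otimes\Phi^*$. Birkhoff's theorem together with the barycenter identity $\mu^*(\ell)=\int_{\M}\nu(\ell)\Phi^*(d\nu)$ gives, a.s.,
\[
\frac{1}{t}\int_0^t \Psi_0(s)\,ds \;\longrightarrow\; \int_0^\infty\!\int_0^1 [f(x,y,0)+xh(x,y,0)]\mu^*(dx)\hat\mu(dy) \;=\; \lambda + c_2.
\]

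\textbf{Bootstrap -- the main obstacle -- and weak convergence of $S$.} Plugging the upper Lipschitz bound into the It\^o expansion yields
\[
\limsup_{t\to\infty}\frac{\ln I(t)}{t}\;\le\; \lambda + (L_1+L_2)\limsup_{t\to\infty}\bar I(t), \qquad \bar I(t):=\frac{1}{t}\int_0^t I(s)\,ds.
\]
The genuine difficulty is to rule out $\limsup \bar I(t)>0$, since Lemma~\ref{lem2.1} only gives tightness of $I(t)$. My plan is to break the deadlock via the bootstrap developed in \cite{DN-JDE}: first show that $\Ei I(t)$ decays exponentially (e.g., by taking expectation in the It\^o identity for $I^p$ with $p$ small, and exploiting $\lambda<0$ together with the boundary ergodic average above), so that Fubini forces $\int_0^\infty I(s)\,ds<\infty$ a.s.\ and hence $\bar I(t)\to 0$; re-inserting this limit gives $\limsup \ln I(t)/t \le \lambda$, so $I(t)\to 0$ exponentially. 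The matching $\liminf\ge \lambda$ follows from the symmetric Lipschitz bound combined with $t^{-1}\int_0^t(\varphi-S)\,ds\to 0$, the latter obtained from the linear SDE $d\tilde S = -b_1\tilde S\,dt + \sigma_1\tilde S\,dB_1 + I\Psi\,dt$ for $\tilde S:=\varphi-S$: variation of constants with exponentially decaying forcing $I\Psi$ gives $\tilde S(t)\to 0$ a.s. This same decay of $\tilde S(t)$ shows that $S(t)$ inherits the limiting distribution $\hat\mu$ of $\varphi(t)$, yielding the weak convergence assertion.
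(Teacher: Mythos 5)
There is a genuine gap at the step you yourself flag as ``the main obstacle.'' Your plan to rule out $\limsup_t \bar I(t)>0$ by first proving exponential decay of $\Ei I(t)$ (via It\^o for $I^p$ and ``exploiting $\lambda<0$'') does not work as stated: $\lambda<0$ is an \emph{averaged} condition along the boundary process $(\varphi,\Pi_t)$, not a pointwise sign condition on the drift $-c_2+\int_0^1[f(x,S,I)+xh(x,S,I)]\Pi_s(dx)$. Consequently there is no supermartingale/Gronwall inequality for $\E I^p(t)$ available until you already know that the trajectory stays close to $(\varphi,0)$ --- which is precisely what you are trying to prove. The argument is circular. The paper breaks this deadlock differently, by localization: it fixes $\theta_0\lesssim\eps/(L_1+L_2)$, starts from $v\le\hat\delta$ small (Lemma~\ref{lem2.2} guarantees $I$ stays below $\theta_0$ up to a large time $T$ with high probability), works on $[T,\tau_{\theta_0}]$ where $\tau_{\theta_0}$ is the exit time of $\{I\le\theta_0\}$, so that the error term $(L_1+L_2)\frac1t\int_0^t I(s)\,ds\le (L_1+L_2)\theta_0\le\eps$ is controlled \emph{by construction}, derives $\ln I(t)\le\ln v+(\lambda+4\eps)t<\ln\theta_0$ on a high-probability event, and concludes $\tau_{\theta_0}=\infty$ by a continuity/contradiction argument (Proposition~\ref{lm3.1}). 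Only then does $I(t)\to0$ and $\bar I(t)\to 0$ follow.

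A second, related omission: this localized argument only covers initial data with $v$ small (and $\pi\in\M_{\Phi^*}$), whereas the theorem is stated for arbitrary $(u,v,\pi)\in\R_+^{2,\circ}\times\M$. The paper supplies the missing reduction in the proof of Theorem~\ref{R<1}: transience in $\R_+^{2,\circ}$ forces $\hat\mu\times\bdelta\times\Phi^*$ to be the unique invariant measure, tightness of the occupation measures $U^t_{u,v,\pi}$ then shows the process enters $(0,H)\times(0,\delta)\times\M_{\Phi^*}$ in finite time with probability close to one, and the strong Markov property transfers Proposition~\ref{lm3.1} to general initial data. Your proposal has no counterpart to this step. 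The remaining ingredients of your sketch (the It\^o expansion \eqref{Lia}, the comparison $S\le\varphi$, the Lipschitz sandwich, and the ergodic limit for $(\varphi,\Pi_t)$, together with the identification of the exact limit $\lambda$) are sound and consistent with the paper, but without the stopping-time localization and the occupation-measure reduction the proof does not close.
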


In order to prove Theorem \ref{R<1}, we need the following auxiliary results.

\begin{lm}\label{lem2.2}
For any $T,H>1,$ $\eps >0, \theta >0$, there is a $\delta =\delta (H,T,\eps,\theta)$ such that
$$\PPi\{ \tau_{\theta} \geq T \}\geq 1-\eps,\;\;\forall \; (u,v,\pi) \in [0,H]\times (0,\delta]\times\M,$$
where $\tau_{\theta}=\inf\{t\geq 0: I_{u,v}(t)> \theta \}.$
\end{lm}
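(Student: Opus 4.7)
The strategy is the standard ``small initial infection stays small for a bounded time'' argument, executed through It\^o's formula applied to $\ln I(t)$. First I would invoke Lemma \ref{lem2.1}(ii) to find $\bar H=\bar H(H,T,\eps)>\theta$ such that, uniformly for $(u,v,\pi)\in[0,H]\times[0,H]\times\M$,
\[
\PPi\Bigl\{0\le S(t),I(t)\le\bar H,\ \forall t\in[0,T]\Bigr\}\ge 1-\tfrac{\eps}{2}.
\]
Call this event $E_1$, and let $\tau_{\bar H}:=\inf\{t\ge 0:S(t)\vee I(t)>\bar H\}$, so that $E_1\subset\{\tau_{\bar H}\ge T\}$.

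Next, applying It\^o's formula to $\ln I(t)$ and using the representation in \eqref{Lia},
\[
\ln I(t)=\ln v+\int_0^t\!\Bigl[-c_2+\!\int_0^1\! f(x,S(s),I(s))\Pi_s(dx)+\!\int_0^1\! xh(x,S(s),I(s))\Pi_s(dx)\Bigr]ds+\sigma_2 B_2(t).
\]
By Assumption \ref{asp-1}, $f(x,s,i)=|f(x,s,i)-f(x,0,i)|\le L_1 s$ and $h(x,s,i)\le L_2 s$, so for $t\le T\wedge\tau_{\bar H}$ the drift above is bounded above by $K:=(L_1+L_2)\bar H$ (the $-c_2$ term only helps). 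Therefore on $E_1$, for every $t\in[0,T]$,
\[
\ln I(t)\le \ln v+KT+\sigma_2\sup_{0\le s\le T}B_2(s).
\]

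Now I would control the Brownian supremum: by the reflection principle, one can choose $M=M(T,\eps)$ large enough that $\PP\bigl\{\sigma_2\sup_{0\le s\le T}B_2(s)\le M\bigr\}\ge 1-\eps/2$. Call this event $E_2$. On $E_1\cap E_2$ we have, uniformly in $t\in[0,T]$,
\[
I(t)\le v\,e^{KT+M}.
\]
Choosing
\[
\delta:=\theta\,e^{-KT-M},
\]
and assuming $v\le\delta$, we get $I(t)\le\theta$ for all $t\in[0,T]$, so $\tau_\theta\ge T$ on $E_1\cap E_2$. A union bound gives $\PPi(E_1\cap E_2)\ge 1-\eps$, which establishes the claim.

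\textbf{Main obstacle.} There is nothing truly difficult, but the delicate point is that both $K$ and the $L^\infty$-bound on $S,I$ need to be chosen \emph{uniformly} in the initial law $\pi\in\M$ of the filter; this is exactly what Lemma \ref{lem2.1}(ii) provides, since the bound there holds for all $\pi\in\M$ thanks to the fact that $f,h$ and the random measure $\Pi_t$ only enter the $(S,I)$-dynamics through bounded functionals of the compact set $[0,1]$. Once that uniformity is in hand, the argument is quantitative and the value of $\delta$ depends only on $H,T,\eps,\theta$ through $\bar H$, $K$, and $M$, as required.
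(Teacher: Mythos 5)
Your proof is correct and takes essentially the same route as the paper's: both rely on Lemma \ref{lem2.1}(ii) to confine $S,I$ below a constant $\bar H$ on $[0,T]$, the Lipschitz bounds $f(x,s,i)\le L_1 s$ and $h(x,s,i)\le L_2 s$ to control the drift of $\ln I(t)$, and a high-probability bound on the Brownian term before shrinking $\delta$. The only immaterial difference is that the paper bounds $\sigma_2 B_2(t)$ via the exponential martingale inequality rather than the reflection principle for the running supremum.
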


\begin{proof}
By the exponential martingale inequality \cite[Theorem 7.4, p. 44]{MAO}, we have $\PPi(\Omega_1)\geq 1-\dfrac{\eps}{2}$, where
$$\Omega_1=\left\{ \sigma_2 B_2(t)\leq \dfrac{\sigma_2^2t}{2}+\ln \dfrac{2}{\eps}\;\forall t\geq 0\right\}.$$
In view of part (ii) Lemma \ref{lem2.1}, there exists a $\bar H=\bar H(T,H,\eps)$ such that $\PPi(\Omega_2)\geq 1-\dfrac{\eps}{2}$, where
$$\Omega_2=\{ 0 \leq S(t),I(t) \leq \bar H \;\;\forall t \in [0,T] \}.$$
Applying  It\^{o}'s formula to equation \eqref{eq-main} yields
that
\begin{equation}\label{lni}
\begin{aligned}
\ln I(t)=&\ln i -c_2t+\int_0^t\int_0^1f(x,S_{u,v}(s),I_{u,v}(s))\Pi_s(dx)ds\\  &+\int_0^t\int_0^1xh(x,S(s),I(s))\Pi_s(dx)ds
+\sigma_2B_2(t).
\end{aligned}
 \end{equation}
Therefore, for any $(s,i,\pi) \in [0,H]\times (0,H]\times\M$ and $\omega \in \Omega_1 \cap \Omega_2$ we have from \eqref{lni} and the Lipschitz continuity of $f$ and $h$ that
$$\ln I(t) < \ln i - b_2T+T\big(2L_1\bar H+2L_2\bar H\big) + \ln \dfrac{2}{\eps},\;\;\forall t\in [0,T]. $$
Hence, we can choose a sufficiently small $\delta =\delta (H,T,\eps,\theta)<H$  such that for all $(s,i,\pi)\in [0,H]\times (0,\delta]\times\M$ and $0\leq t\leq T$, $\ln I(t)<\ln\theta,\forall\omega\in\Omega_1\cap\Omega_2$. The proof is complete.
\end{proof}

\begin{prop}\label{lm3.1}
Suppose that the assumptions from Theorem \ref{R<1} hold. For any $0<\varepsilon<\min\{\frac 15,-\frac{\lambda}{5}\}$ and $H>1$, there exists $\widehat \delta=\widehat \delta(\eps, H)\in (0,H^{-1})$ such that
$$\PPi\left\{\limsup_{t\to\infty}\dfrac{\ln I(t)}{t}=\lambda  \right\}\geq 1-4\eps,\;\forall (u,v,\pi)\in [H^{-1};H]\times(0;\widehat \delta]\times\M_{\Phi^*}.$$
\end{prop}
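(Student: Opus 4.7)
The plan is to use the It\^o expansion \eqref{Lia} for $\ln I(t)/t$ and to show that, on an event of $\PPi$-probability at least $1-4\eps$, each non-deterministic term in that expansion has a well-controlled long-run limit, so that the sum collapses to the quantity $\lambda$ defined in \eqref{eq-lambda}. I would first invoke Lemma \ref{lem2.2}, together with part (ii) of Lemma \ref{lem2.1}, to choose $\widehat\delta>0$ so small that for every admissible initial point $(u,v,\pi)\in[H^{-1},H]\times(0,\widehat\delta]\times\M$, both $I(t)\le\theta$ and $\bar H^{-1}\le S(t)\le\bar H$ hold for all $t\in[0,T]$ with probability at least $1-2\eps$. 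Here $T=T(\eps,\lambda)$ is taken large enough that the ergodic averages described below are within $\eps$ of their limits, and $\theta=\theta(\eps,\bar H)$ is chosen small enough that the Lipschitz bounds on $f$ and $h$ yield $|f(x,s,i)-f(x,s',0)|+|xh(x,s,i)-xh(x,s',0)|\le\eps$ whenever $|s-s'|\le C\theta$ and $i\le\theta$. Letting $\varphi(t)$ solve \eqref{phi} with $\varphi(0)=u$ driven by the same $B_1$ as $S(t)$, a variation-of-constants estimate applied to $S-\varphi$ gives $|S(t)-\varphi(t)|\le C\theta$ uniformly on $[0,T]$ on this good event.

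Next I would use ergodicity of the pair $(\varphi(t),\Pi_t)$: the one-dimensional diffusion $\varphi$ is ergodic with invariant density $\hat\mu$ from \eqref{hpp}, the $\M$-valued filter $\Pi_t$ is Markov-Feller with unique invariant measure $\Phi^*$ of barycenter $\mu^*$ by the results recalled in Section \ref{sec:filter}, and the driving noises $B_1$ and the innovation $\beta$ are independent, so the joint process inherits the invariant measure $\hat\mu\otimes\Phi^*$. Since $\pi\in\M_{\Phi^*}$, Birkhoff's theorem yields
\[
\frac{1}{t}\int_0^t\!\int_0^1 f(x,\varphi(s),0)\,\Pi_s(dx)\,ds \;\longrightarrow\; \int_0^\infty\!\int_0^1 f(x,y,0)\,\mu^*(dx)\,\hat\mu(dy),
\]
and analogously for the $xh$ term, where the identity $\int_\M \nu(l)\Phi^*(d\nu)=\mu^*(l)$ collapses the outer integral over $\M$. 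On an event of probability $\ge 1-\eps$, these averages are within $\eps$ of their limits for all $t\ge T$. Combining this with the Lipschitz substitution from the coupling step and the almost-sure limits $\sigma_2 B_2(t)/t\to 0$ and $\ln v/t\to 0$, plugging into \eqref{Lia} gives $\limsup_{t\to\infty}\ln I(t)/t \le \lambda+C'\eps$ together with the matching lower bound on an event of probability at least $1-4\eps$. Since the upper bound forces $I(t)\to 0$ exponentially, the coupling $S(t)\approx\varphi(t)$ actually propagates past $T$ to all large $t$, and letting $\eps$ be arbitrary yields the claimed equality $\limsup_{t\to\infty}\ln I(t)/t = \lambda$ on this event.

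The main obstacle will be promoting the ``$I(t)\le\theta$ on $[0,T]$'' control from Lemma \ref{lem2.2} into trapping of $I(t)$ for \emph{all} $t\ge 0$: this requires a bootstrapping argument using that once the ergodic average in \eqref{Lia} has accumulated, the negative drift $\approx \lambda<0$ prevents $I$ from re-entering a moderate region, and the specific choice $\eps<\min\{1/5,-\lambda/5\}$ is tailored to make the bootstrap close. A secondary difficulty is the rigorous application of Birkhoff's theorem to the $\M$-valued process $\Pi_t$ starting from a fixed $\pi$ rather than a $\Phi^*$-a.e.\ $\pi$; the hypothesis $\pi\in\M_{\Phi^*}$ together with the Markov-Feller property of $\Pi_t$ is exactly what makes this step legitimate.
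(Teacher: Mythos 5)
Your skeleton is right --- It\^o expansion \eqref{Lia}, ergodic averages of $(\varphi(t),\Pi_t)$ against $\hat\mu\otimes\Phi^*$, control of $I$ on $[0,T]$ via Lemma \ref{lem2.2}, and a bootstrap to trap $I$ below a small level for all time --- and you correctly flag the bootstrap as the crux. But two steps as written have genuine gaps. First, the two-sided pathwise coupling $|S(t)-\varphi(t)|\le C\theta$ uniformly in $t$ is not justified and is the wrong tool here: the difference $S-\varphi$ carries the multiplicative noise $\sigma_1(S-\varphi)\,dB_1$ and a perturbation of size $\theta\cdot(1+S(t))$ (since $f,h$ are only Lipschitz, $If\lesssim \theta(1+S)$), so a Gronwall/variation-of-constants bound degrades exponentially in $t$ and cannot be propagated past $T$. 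The paper sidesteps this entirely: it introduces a \emph{modified} boundary process $\bar\varphi$ solving \eqref{eq-bphi} with inflated drift $\bar a_1=a_1+L_1+L_2+(L_1+L_2)\theta_0$ and deflated $\bar b_1=b_1-(L_1+L_2)\theta_0$, obtains the one-sided comparison $S(t)\le\bar\varphi(t)$ valid whenever $I(t)\le\theta_0$, and then uses the monotonicity of $f(x,\cdot,0)$, $h(x,\cdot,0)$ (third item of Assumption \ref{asp-1}) plus the closeness of the invariant means $\bar a_1/\bar b_1$ and $a_1/b_1$ (condition \eqref{eq-theta0}) to replace $S$ by $\varphi$ in the ergodic average up to an $O(\eps)$ error. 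Only a one-sided bound is needed for the upper estimate $\ln I(t)\le\ln v+(\lambda+4\eps)t$, which is what closes the bootstrap.

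Second, your ``matching lower bound'' is asserted, not proved, and the one-sided comparison cannot deliver it; so your argument only yields $\limsup_{t\to\infty}\ln I(t)/t\le\lambda+C'\eps$, not the claimed equality. The paper obtains the exact limit by a different mechanism: once $I(t)\to 0$ and $S(t)\le\bar\varphi(t)$ are secured on the good event, the moment bound \eqref{se6} gives tightness and uniform integrability of the random occupation measures $\wdt U^t_{u,v,\pi}$; by \cite[Lemma 5.6]{HN18} every weak limit is an invariant measure supported on $[0,\infty)\times\{0\}\times\M$, hence equals $\hat\mu\times\bdelta\times\Phi^*$, and passing to the limit in \eqref{lni} against this occupation measure yields $\lim_{t\to\infty}\ln I(t)/t=\lambda$ exactly. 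You would need to add this occupation-measure identification (or some substitute producing a lower bound on the ergodic average of $f(x,S(s),I(s))$) to reach the stated conclusion.
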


\begin{proof}
Let $\theta_0=\theta_0(\eps)<\frac{\eps}{L_1+L_2}\wedge \frac {b_1}{L_1+L_2}$ be such that
\begin{equation}\label{eq-theta0}
\frac{a_1+\theta_0(L_1+L_2+L_1\theta_0+L_2\theta_0)}{b_1-(L_1+L_2)\theta_0}-\frac{a_1}{b_1}<\frac{\eps}{L_1+L_2}.
\end{equation}
Consider the following stochastic differential equation
\begin{equation}\label{eq-bphi}
d \bar\varphi(t)=\big(\bar a_1-\bar b_1\bar \varphi(t)\big)dt+\sigma_1 \bar \varphi(t)dB_1(t),
\end{equation}
where $\bar a_1=a_1+L_1+L_2+L_1\theta_0+L_2\theta_0$, $\bar b_1=b_1-(L_1+L_2)\theta_0>0$.
A comparison result shows that $\varphi(t)\leq\bar\varphi(t),t\geq 0\a.s$ provided that $\varphi(0)=\bar \varphi(0)$.
Moreover, the strong law of large numbers yields that
\begin{equation}\label{eq-phibphi}
\begin{aligned}
\lim_{t\to\infty}\frac 1t&\bigg|\int_0^t\int_0^1f(x,\bar\varphi(s),0)\Pi_s(dx)ds+\int_0^t\int_0^1xh(x,\bar \varphi(s),0)\Pi_s(dx)ds\\
&\quad-\int_0^t\int_0^1f(x,\varphi(s),0)\Pi_s(dx)ds-\int_0^t\int_0^1xh(x, \varphi(s),0)\Pi_s(dx)ds\bigg|\\
&\leq \lim_{t\to\infty} \frac1t\int_0^t( L_1+L_2)(\bar \varphi(s)-\varphi(s))ds\\
&=(L_1+L_2)\left(\frac{\bar a_1}{\bar b_1}-\frac{a_1}{b_1}\right)\a.s
\end{aligned}
\end{equation}
Combining \eqref{eq-theta0} and \eqref{eq-phibphi} implies that
\begin{equation}\label{eq-phibphi-1}
\begin{aligned}
\lim_{t\to\infty}\frac 1t&\bigg|\int_0^t\int_0^1f(x,\bar\varphi(s),0)\Pi_s(dx)ds+\int_0^1xh(x,\bar \varphi(s),0)\Pi_s(dx)ds
\\
&\quad-\int_0^t\int_0^1f(x,\varphi(s),0)\Pi_s(dx)ds-\int_0^t\int_0^1xh(x, \varphi(s),0)\Pi_s(dx)ds\bigg| <\eps\a.s
\end{aligned}
\end{equation}
Therefore, there exists $T_1=T_1(\eps)$ such that
$\PP(\Omega_3)\geq 1-\eps$, where
$$
\begin{aligned}
\Omega_3:=&\Bigg\{\bigg|\frac 1t\int_0^t\int_0^1f(x,\bar\varphi(s),0)\Pi_s(dx)ds+\frac1t\int_0^t\int_0^1xh(x,\bar \varphi(s),0)\Pi_s(dx)ds\\
&\quad-\frac 1t\int_0^t\int_0^1f(x,\varphi(s),0)\Pi_s(dx)ds-
\frac1t\int_0^t\int_0^1xh(x, \varphi(s),0)\Pi_s(dx)ds\bigg| <2\eps,\forall t\geq T_1\Bigg\}
\end{aligned}
$$
By definition of $\lambda$ and ergodicity of $\varphi(t), \Pi_t$, we obtain
\begin{equation*}
\PP_{H,\pi}\left\{-c_2+ \lim_{t\to\infty}\dfrac1t\int_0^{t}\int_0^1f(x,\varphi(s),0)\Pi_s(dx)ds+\lim_{t\to\infty}\frac 1t\int_0^t \int_0^1xh(x,\varphi(s),0)\Pi_s(dx)ds= \lambda \right\}=1,
\end{equation*}
where $\PP_{H,\pi}$ indicates the initial values of $(\varphi(t),\Pi_t)$.
As a result, there exists $T_2=T_2(H,\eps)>1$ such that $\PP_{H,\pi}(\Omega_4)\geq 1-\eps$, where
$$\Omega_4=\left\{-c_2+\dfrac1t \int_0^t\int_0^1 f(x,\varphi(s),0)\Pi_s(dx)ds+\frac 1t\int_0^t \int_0^1xh(x,\varphi(s),0)\Pi_s(dx)ds\leq \lambda + \eps,\;\forall t\geq T_2\right\} .$$
In view of the uniqueness
of solutions, we have for all $u\in[0,H]$ that $\varphi_u(s)\leq \varphi_H(s), s \geq 0$ almost surely where the subscript of $\varphi(s)$ indicates the initial value $\varphi(0)$.
This implies that $\PP_{u,\pi}(\Omega_4)\geq 1-\eps$ for all $(u,\pi)\in[0,H]\times\M_{\Phi^*}$.

Since $\lim\limits_{t\to\infty}\dfrac{B_2(t)}{t}=0 \text{ a.s.},$ there is a $T_3=T_3(\eps)>1$ such that $\mathbb{P}(\Omega_5)\geq1-\eps$ where
$$\Omega_5=\left\{\sigma_2\dfrac{B_2(t)}{t}<\eps\;\forall t \geq T_3\right\}.$$
Let $T=\max\{T_1,T_2,T_3\}$. By Lemma \ref{lem2.2}, there is a $\hat\delta=\hat\delta(\eps,H)<\theta_0$ such that for all $(u,v,\pi)\in[0,H]\times (0,\hat\delta]\times\M$,
$
\PPi(\Omega_6)\geq 1-\eps,
$
where
$$
\Omega_6=\{\tau_{\theta_0}\geq T\}.
$$

Now, it follows from \eqref{lni} that $\forall (u,v,\pi) \in [0,H]\times (0,\delta]\times\M_{\Phi^*}$  we have in $\bigcap_{j=3}^{6}\Omega_j$, for all $t\in [T,\tau_{\theta_0}]$ that
\begin{equation}\label{lni2}
\begin{aligned}
\ln I(t)
=&\ln v -c_2t +  \!\!\int_0^{t}\!\Big(\!\int_0^1\! f(x,S(s),I(s))\Pi_s(dx) + \!\! \int_0^1\! xh(x,S(s),I(s))\Pi_s(dx)\!\Big)ds+\sigma_2B_2(t)\\
=&\ln v -c_2t +\int_0^{t}\left(\int_0^1 f(x,S(s),0)\Pi_s(dx) + \int_0^1xh(x,S(s),0)\Pi_s(dx)\right)ds+\sigma_2B_2(t)
\\&+ \int_0^{t}\bigg (\int_0^1f(x,S(s),I(s))\Pi_s(dx) + \int_0^1xh(x,S(s),I(s))\Pi_s(dx)
\\
&\qquad\qquad-\int_0^1f(x,S(s),0)\Pi_s(dx) - \int_0^1xh(x,S(s),0)\Pi_s(dx)\bigg)ds\\
\leq&\ln v -c_2t +  \int_0^{t}\left( \int_0^1f(x,\bar \varphi(s),0)\Pi_s(dx) + \int_0^1xh(x,\bar \varphi(s),0)\Pi_s(dx)\right)ds\\
&\qquad +\eps t+(L_1+L_2)\int_0^t I(s)ds\\
\leq&\ln v -c_2t +  \int_0^{t}\left( \int_0^1f(x,\varphi(s),0)\Pi_s(dx) + \int_0^1xh(x,\varphi(s),0)\Pi_s(dx)\right)ds+3\eps t\\
\leq&\ln v +(\lambda+4\eps)t
<\ln\hat\delta<\ln\theta_0.
\end{aligned}
\end{equation}
In the above, we have used the fact of that whenever $I(t)\leq \theta_0$, one has
$$
\begin{aligned}
I(s)\int_0^1xh(x,S(s),I(s))\Pi_s(dx)&\leq \theta_0(h(0,0,0)+L_2+L_2S(s)+L_2I(s))\\
&\leq \theta_0(L_2+L_2\theta_0)+L_2\theta_0S(s),
\end{aligned}
$$
so $S(s)\leq\bar\varphi(s)$ for all $t\in [0,\tau_{\theta_0}]$.

As a result of \eqref{lni2}, we must have $\tau_{\theta_0}=\infty$ for $\omega\in\bigcap_{j=3}^6\Omega_j$.
We obtain this claim by a contradiction argument as follows. If the claim is false then we have a set $\Omega_7\subset\bigcap_{i=3}^6\Omega_i$
with $\PP(\Omega_7) > 0$ and $\tau_{\theta_0} <\infty$ for any $\omega\in\Omega_7$. We already proved that $\tau_{\theta_0}>T$ for
$\omega\in\bigcap_{i=3}^6\Omega_i$. Moreover, in view of \eqref{lni2}, we have $I(t) \leq \hat\delta<\theta_0$ for any $t\in[T,\tau_{\theta_0}]$. Because
$I(t)$ is continuous almost surely, for almost all $\omega\in\Omega_7$ we have that $\lim_{t\to\tau_{\theta_0}}I(t) = I(\tau_{\theta_0}) <\hat\delta<\theta_0$,
which is a contradiction. So, $\tau_{\theta_0}=\infty$ for $\omega\in\bigcap_{j=1}^4\Omega_j$.
We deduce from $\tau_{\theta_0}=\infty$ and \eqref{lni2} that
$\lim_{t\to\infty} I(t)=0$ for almost $\omega\in\bigcap_{j=3}^6\Omega_j$.

Next, because $I(t)\leq \theta_0$ for any $t\geq 0$ for almost all $\omega	\in\bigcap_{j=3}^6\Omega_j$, we have shown that $S(t)\leq \bar\varphi(t), \forall t\geq 0$ almost surely in $\bigcap_{j=3}^6\Omega_j$.
Similar to \eqref{phi}, since $\bar b_1>0$,
the solution to \eqref{eq-bphi} has a unique invariant measure, say $\bar\mu$.
Then we have from the ergodicity of $\bar \varphi(t)$ that for some small $\hat p>0$,
\begin{equation}\label{se6}
\limsup_{t\to\infty}\frac1t\int_0^t S^{1+\hat p}(u)du\leq \lim_{t\to\infty}\frac1t\int_0^t \bar\varphi^{1+\hat p}(u)du=\int x^{1+\hat p}\bar\mu(dx)<\infty \text{ almost surely in }\bigcap_{j=3}^6\Omega_j.
\end{equation} 	
Using \eqref{se6}, the fact that $\lim_{t\to\infty} I(t)=0$, and the compactness of $\M$,
the family of random occupation measures
$$\wdt U^t_{u,v,\pi}(\cdot):=\frac 1t\int_0^t \1_{\{(S(s),I(s),\Pi_s)\in \cdot\}}ds$$ is tight for almost all $\omega\in\bigcap_{j=1}^3\Omega_j$.
From \cite[Lemma 5.6]{HN18},
with probability 1, any weak limit of $\wdt U^t_{u,v,\pi}(\cdot)$ as $t\to\infty$ (if it exists)
is an invariant probability measure,
which has support on $[0,\infty)\times\{0\}\times\M.$	
Because $\hat\mu\times\bdelta\times\Phi^*$, where $\bdelta$ is the Dirac measure concentrated at $0$, is an
invariant probability measure on $[0,\infty)\times\{0\}\times\M$, the family
$\wdt U^t_{u,v,\pi}(\cdot)$ converges weakly to $\bmu_0\times\bdelta$ almost surely in $ \bigcap_{j=3}^6\Omega_j$ as $t$ tends to $ \infty$.
One has from the weak
convergence
and the uniform integrability
in
\eqref{se6} that
$$
\begin{aligned}
\lim_{t\to\infty}\frac{\ln I(t)}t=&\lim_{t\to\infty}\frac1t\bigg[
-b_2t-\frac{\sigma_2^2t}2-\int_0^t\int_0^1f(x,S(s),I(s))\Pi_s(dx)ds \\
&\qquad\qquad +\int_0^t\int_0^1xh(x,S(s),I(s))\Pi_s(ds)ds\bigg]+\lim_{t\to\infty}\frac{\sigma_2B_2(t)}t\\
=&\lim_{t\to\infty}\!\int_{\R^2_+\times\mathcal P(\M)}\!\!\Big[\!-c_2 +\int_0^1f(x,y,i)\nu(dx)+\int_0^1 xh(x,y,i)\nu(dx)\!\Big]\wdt U^t_{u,v,\pi}(dy,di,d\nu)\\
=&-c_2 +\int_0^\infty \!\int_\M\!\Big[\int_0^1f(x,y,0)\nu(dx)\!\Big]\Phi^*(d\nu)\hat\mu(dy)\\
&+\int_0^\infty\int_{\M} \Big[\int_0^1 xh(x,y,0)\nu(dx)\Big]\Phi^*(d\nu)\hat\mu(dy)\\
=&\lambda<0,
\end{aligned}
$$
for almost every $\omega\in \bigcap_{j=3}^6\Omega_j$, $(u,v ,\pi)\in [0,H]\times (0,\hat\delta]\times\M_{\Phi^*}$.
The proof is complete by noting that $\PP(\bigcap_{j=3}^6\Omega_j)>1-4\eps$.
\end{proof}

\begin{proof}[Proof of Theorem \ref{R<1}]
	Let $\eps>0$ be arbitrary.
	In view of Proposition \ref{lm3.1},
	the process $(S(t), I(t))$ is transient (see e.g., \cite{WK} for definition) in $\R^{2,\circ}_+$.
	Thus, the process has no invariant probability measure in $\R^{2,\circ}_+$.
	Thus $\hat\mu\times\bdelta\times\Phi^*$
	is the unique invariant probability measure of $(S(t), I(t), \Pi_t)$   in $\R^2_+\times\M_{\Phi^*}$.
	
	Let $H$ be sufficiently large that $\hat\mu((0,H))>1-\eps$.
	Thanks to Lemma \ref{lem2.1} part (i) and compactness of $\M$, the process  $(S(t), I(t), \Pi_t)$ is tight. Consequently,
	the occupation measure
	$$U^t_{u,v,\pi}(\cdot):=\dfrac1t\int_0^t\PPi\left\{(S(s),I(s),\Pi_s)\in\cdot\right\}ds$$
	is tight in $\R^2_+\times\M$.
	Since any weak limit of $U^t_{u,v,\pi}$ as $t\to\infty$ must be an invariant probability measure of $(S(t),I(t),\Pi_t)$ (see \cite{HN18}),
	we have that
	$U^t_{u,v,\pi}$ converges weakly to $\hat\mu\times\bdelta\times\Phi^*$ as $t\to\infty$.
	As a result, for any $\delta>0$, there exists a  $\hat T>0$ such that
	$$U^{\hat T}_{u,v,\pi}((0,H)\times(0,\delta)\times\M_{\Phi^*})>1-\eps,$$
	or equivalently,
	$$\dfrac1{\hat T}\int_0^{\hat T}\PPi\{(S(t),I(t),\Pi_t)\in (0,H)\times(0,\delta)\times \M_{\Phi^*}\}dt>1-\eps.$$
As a result, we have
	$$\PPi\{\hat\tau\leq\hat T\}>1-\eps,$$
	where $\hat\tau=\inf\{t\geq 0: (S(t),I(t),\Pi_t)\in (0,H)\times(0,\delta)\times \M_{\Phi^*}\}$.
	Using the strong Markov property and Proposition \ref{lm3.1},
	we have that
	$$
	\PPi\left\{\lim_{t\to\infty}\dfrac{\ln I(t)}t\leq \lambda+4\eps\right\}\geq 1-\eps,
	$$
	for any $(u,v,\pi)\in\R^{2,*}_+\times\M$. Therefore, since $\eps > 0$ is arbitrary, the assertion in convergence of $I(t)$ follows.
Once we have the exponentially fast convergence to $0$ of $I(t)$, the convergence of $S(t)$ to $\varphi(t)$ follows from standard arguments; see, for example, \cite{DANG,DN-JDE}.
\end{proof}

\subsection {Permanence}\label{secper}
In this section, we deal with  the case $\lambda>0$ and prove that the system is permanent in the following sense.
\begin{deff}
	We say that system \eqref{eq-main} is permanent (in mean) if for any initial value $(u,v,\pi)\in\R_+^{2,\circ}\times\M$
	$$
	\liminf_{t\to\infty}\frac 1t\int_0^t \E_{u,v,\pi}S(s)ds>0,\quad \liminf_{t\to\infty}\frac 1t\int_0^t \E_{u,v,\pi}I(s)ds>0.
	$$
\end{deff}

\begin{thm}\label{R>1}
Assume that $\lambda>0$. Then for any initial
data
$(u,v,\pi)\in \mathbb{R}_+^{2,\circ}\times\M$,  system \eqref{eq-main} is permanent (in mean).

\end{thm}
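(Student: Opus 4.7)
The plan is to mirror the occupation-measure framework used for Theorem \ref{R<1} in the opposite direction: exploit the positivity of $\lambda$ to exclude any invariant probability measure of $(S(t), I(t), \Pi_t)$ concentrated on the extinction boundary $\R_+\times\{0\}\times\M$, and convert the resulting pathwise persistence of $I$ into the mean lower bounds required by permanence. By Lemma \ref{lem2.1}(i) together with compactness of $\M$, the random occupation measures $\wdt U^t_{u,v,\pi}$ are a.s.\ tight in $\R_+^2\times\M$, and by \cite[Lemma 5.6]{HN18} each weak subsequential limit is an invariant probability measure of the joint Markov process. Any invariant measure supported on $\R_+\times\{0\}\times\M$ must factor as $\hat\mu\times\bdelta\times\Phi^*$, since the boundary $S$-dynamics reduce to \eqref{phi} with unique stationary law $\hat\mu$, while $\Pi_t$ has unique invariant measure $\Phi^*$ under Assumption \ref{asp-2}.

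The central step is to exclude $\hat\mu\times\bdelta\times\Phi^*$ when $\lambda>0$. The derivation leading to \eqref{eq-lambda} identifies the Lyapunov exponent of $I$ with respect to this measure as precisely $\lambda$. Adapting the Khasminskii-type Lyapunov approach of \cite{DANG,DN-JDE,HN18}, I would introduce an auxiliary function of the form $U(s,i,\pi)=-\theta\ln i+\rho(s+i)$ with $\theta,\rho>0$ small, so that $\Lom U$ can be estimated via Assumption \ref{asp-1} and the ergodicity of the filter from Section \ref{sec:filter}. A direct computation shows that, outside a compact subset $\mathcal{K}\subset\R_+^{2,\circ}\times\M$, one obtains $\Lom U\leq -\frac{\theta\lambda}{2}+K\mathbf{1}_{\mathcal{K}}$, providing a Foster--Lyapunov drift away from the boundary. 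This yields the pathwise persistence
\begin{equation*}
\PPi\left\{\liminf_{t\to\infty}\dfrac{1}{t}\int_0^t \mathbf{1}_{\{I(s)\geq\delta\}}\,ds\geq\rho_0\right\}=1
\end{equation*}
for constants $\delta,\rho_0>0$ depending only on $\lambda$; in particular, no invariant measure of $(S,I,\Pi)$ charges the boundary.

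Permanence in mean then follows by averaging. Fatou's lemma combined with the uniform integrability supplied by Lemma \ref{lem2.1}(i) gives $\liminf_{t\to\infty}\frac{1}{t}\int_0^t\Ei I(s)\,ds\geq\delta\rho_0>0$. For the $S$-bound, I would exploit tightness of the expected occupation measures $U^t_{u,v,\pi}$ and identify any weak limit with an invariant probability measure $\pi^*$ of $(S,I,\Pi)$. Theorem \ref{thm-markov} ensures $\pi^*(\{s>0\})=1$, hence $\int s\,d\pi^*>0$; combined with the balance relation $a_1=b_1\int s\,d\pi^*+b_2\int i\,d\pi^*$ (obtained by averaging the It\^o formula for $S(t)+I(t)$) and the compactness of the set of invariant measures, this yields the uniform gap $\sup_{\pi^*\text{ inv}}\int i\,d\pi^*<a_1/b_2$, which translates into $\liminf_{t\to\infty}\frac{1}{t}\int_0^t\Ei S(s)\,ds>0$.

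The main obstacle is making the Khasminskii-type Lyapunov step rigorous in the presence of the measure-valued component $\Pi_t$, since the standard stochastic persistence machinery of \cite{DANG,DN-JDE,HN18} is developed for SDEs with finite-dimensional environmental noise. The key workaround is that the Lyapunov estimates only involve time-averages of bounded continuous functionals of $\Pi_s$, and by Assumption \ref{asp-2} and Section \ref{sec:filter} these time-averages converge almost surely to integrals against $\Phi^*$ (equivalently against the barycenter $\mu^*$). This effectively reduces the state space back to $\R_+^2$ and allows the finite-dimensional persistence argument of \cite{DN-JDE} to proceed essentially unchanged.
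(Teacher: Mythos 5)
Your overall architecture (tightness of occupation measures via Lemma \ref{lem2.1}(i), identification of weak limits with invariant measures via \cite[Lemma 5.6]{HN18}, exclusion of the boundary measure $\hat\mu\times\bdelta\times\Phi^*$, then averaging to get permanence in mean) matches the paper's, but the mechanism you propose for the central exclusion step has a genuine gap. You claim that for $U(s,i,\pi)=-\theta\ln i+\rho(s+i)$ a ``direct computation'' gives $\Lom U\leq -\tfrac{\theta\lambda}{2}+K\mathbf{1}_{\mathcal K}$ outside a compact $\mathcal K\subset\R_+^{2,\circ}\times\M$. It does not: one has
\begin{equation*}
\Lom(-\ln i)[s,i,\pi]=c_2-\int_0^1 f(x,s,i)\pi(dx)-\int_0^1 xh(x,s,i)\pi(dx),
\end{equation*}
and near the boundary $\{i=0\}$ this is $\leq -\lambda/2$ only \emph{after averaging} $(s,\pi)$ against $\hat\mu\times\Phi^*$ --- that is exactly how $\lambda$ is defined in \eqref{eq-lambda}. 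Pointwise the drift can be positive on a non-compact portion of any neighborhood of the boundary: by Assumption \ref{asp-1}, $f(x,s,0)\leq L_1 s$ and $h(x,s,0)\leq L_2 s$, so for small $s$ the drift of $-\ln i$ is close to $c_2>0$ regardless of $\pi$. Adding $\rho(s+i)$ only controls behavior at infinity ($\Lom(s+i)=a_1-b_1s-b_2i$) and cannot repair this. The standard repair is a corrector term solving (approximately) a Poisson equation for the fluctuation $f(\cdot,\varphi,0)+\int xh\,d\pi-\lambda-c_2$ along the boundary process $(\varphi(t),\Pi_t)$; constructing such a corrector is precisely what is delicate here because one component of the boundary process is the measure-valued filter. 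Your stated workaround (a.s.\ convergence of time averages to $\Phi^*$-integrals) addresses ergodic averages, not the pointwise generator inequality a Foster--Lyapunov argument requires, so the claimed pathwise persistence estimate is not established.

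The paper avoids this entirely with a softer contradiction argument that you should be aware of: if no interior invariant measure exists, then $\hat\mu\times\bdelta\times\Phi^*$ is the \emph{unique} invariant measure, whence by \cite[Lemma 3.4]{HN18} the expected time averages in It\^o's formula for $\ln I(t)$ converge to their boundary-measure averages and $\lim_{t\to\infty}\Ei\frac{\ln I(t)}{t}=\lambda>0$; this contradicts $\Ei\frac{\ln I(t)}{t}\leq\Ei\frac{I(t)}{t}\to 0$ (using $\ln y\leq y$ and Lemma \ref{lem2.1}). No drift inequality is needed. Your final averaging step (Fatou plus uniform integrability for the $I$-bound, and the balance relation $a_1=b_1\int s\,d\pi^*+b_2\int i\,d\pi^*$ for the $S$-bound) is sound and in fact slightly more explicit than the paper's, but it rests on the exclusion step, which as written does not go through.
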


\begin{proof}
	We first prove that all invariant measures of $(S(t),I(t),\Pi(t)$ concentrate on $\R^{2,\circ}\times\M$.
	We assume by contradiction that there is no invariant measure on $\R^{2,\circ}_+$ of $(S(t),I(t))$.
Therefore, there is no invariant measure on $\R^{2,*}_+$ since the solutions starting in $\R^{2,*}_+$ will enter and remain in $\R^{2,\circ}_+$ due to Theorem \ref{thm-exi}.
	As a result, $\hat\mu\times\bdelta\times\Phi^*$ (the unique invariant on the boundary $\R_+\times\{0\}\times\M$) is the unique invariant probability measure of the process $\{S(t),I(t),\Pi_t\}$ on $\R^2_+\times\M$.
	Therefore, by applying \cite[Lemma 3.4]{HN18}, we have
	\begin{equation}\label{limity}
	\begin{aligned}
	\lim_{t\to\infty} \frac1t&\Ei\int_0^t\int_0^1 f(x,S(s),I(s))\Pi_s(dx)ds\\
	&=\int_{\R^2_+\times\M} \int_0^1f(x,y,i)\hat\mu(dy)\bdelta(di)\Phi^*(d\nu)\\
	&=\int_0^\infty\int_0^1 f(x,y,0)\mu^*(dx)\hat\mu(dy).
	\end{aligned}
	\end{equation}
	and
	\begin{equation}\label{limitx}
	\begin{aligned}
	\lim_{t\to\infty} \frac1t&\Ei\int_0^t \int_0^1 xh(x,S(s),I(s))\Pi_s(dx)ds\\
	=&\int_{\R^2_+\times\M} \int_0^1xh(x,y,i)\nu(dx)\hat\mu(dy)\bdelta(di)\Phi^*(d\nu)\\
	&=\int_{\R_+\times\M} \int_0^1xh(x,y,0)\nu(dx)\hat\mu(dy)\Phi^*(d\nu)\\
	&=\int_0^\infty\int_0^1 xh(x,y,0)\mu^*(dx)\hat\mu(dy).
	\end{aligned}
	\end{equation}
	On the other hand,
	$$
	\begin{aligned}
	\Ei\frac{\ln Y(t)}t=&\E_{x,y}\frac{\ln v}t-c_2+\frac{1}{t}\E_{x,y}\bigg(\int_0^t \int_0^1f(x,S(s),I(s))\Pi_s(dx)ds\\
	&\qquad+\int_0^t\int_0^1xh(x,S(s),I(s))\Pi_s(dx)ds\bigg)
	+ \E_{x,y} \frac{\sigma_2W_2(t)}t.
	\end{aligned}
	$$
	As a result,
	we have
	$$
	\begin{aligned}
	\lim_{t\to\infty}&\Ei\frac{\ln I(t)}t\\
	=&
	-c_2+\lim_{t\to\infty}\E_{x,y }\frac1t\left(\int_0^t\int_0^1 f(x,S(s),I(s))\Pi_s(dx)ds+\int_0^t\int_0^1xh(x,S(s),I(s))\Pi_s(dx)ds\right)\\
	=&-c_2+\int_0^\infty \int_0^1f(x,y,0)\mu^*(dx)\hat\mu(dy)+\int_0^\infty\int_0^1 xh(x,y,0)\mu^*(dx)\hat\mu(dy)\\
	=&\lambda>0.
	\end{aligned}
	$$
	This contradicts the fact that
	$$\lim_{t\to\infty}\Ei\frac{\ln I(t)}t\leq \lim_{t\to\infty}\Ei\frac{ I(t)}t=0$$
	because $\ln y\leq y$ while Lemma \ref{lem2.1} implies $\lim_{t\to\infty}\Ei\frac{ I(t)}t=0$.
	As a result, all invariant measures of $(S(t),I(t),\Pi(t))$ concentrate on $\R^{2,\circ}_+\times\M_{\Phi^*}$ (the existence of an invariant measure follows from Lemma \ref{lem2.1}).
	
	By the moment boundedness in Lemma \ref{lem2.1}, we obtain that there exists a sequence $T_k\to\infty$ as $k\to\infty$ such that
	$$
	\liminf_{t\to\infty}\frac 1t\int_0^t\E_{u,v,\pi}S(s)ds=\lim_{k\to\infty}\frac1{T_k}\int_0^{T_k}\E_{u,v,\pi}S(s)ds,
	$$
	$$\liminf_{t\to\infty}\frac 1t\int_0^t\E_{u,v,\pi}I(s)ds=\lim_{k\to\infty}\frac1{T_k}\int_0^{T_k}\E_{u,v,\pi}I(s)ds.
	$$
	Therefore, by applying \cite[Lemma 3.4]{HN18} and the fact that all invariant measures of $(S(t),I(t),\Pi(t))$ concentrate on $\R^{2,\circ}_+\times\M_{\Phi^*}$, we obtain the desired result.
\end{proof}

\subsection{Hidden Markov chain}
\label{sec:dis}
This section is devoted to the case when the signal process $\alpha(t)$ takes values in a  finite set $\{m_1,\dots,m_{n^*}\}\subset[0,1]$ and admits a unique invariant measure $(\mu^*_1,\dots,\mu^*_n)\in\mathcal S_{n^*}$.
In this case,  system \eqref{eq-main} is replaced by \eqref{eq-main-dis-1}.
We first have the following well-posedness and other
preliminary results.

\begin{thm} Consider  system \eqref{eq-main-dis-1}.
		For any $(u, v,e)\in\R_+^2\times \mathcal S_{n^*}$, there exists a unique global solution
		to system \eqref{eq-main-dis-1} with  initial data $(u,v,e)$.
		The three-component process $\{(S(t), I(t),\Pi_t: t\geq0\}$
		is a Markov-Feller process.
		Moreover, we have
		$\PP_{u,v,e}\{S(t)>0, t>0\}=1$ and
		$\PP_{u,0,\pi}\{I(t)=0, t>0\}=1$,
		$\PP_{u,v,e}\{I(t)>0, t>0\}=1$ provided $v>0$.
\end{thm}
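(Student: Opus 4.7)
The plan is to follow the same template as Theorems \ref{thm-exi} and \ref{thm-markov}, adapted to the coupled $(n^\ast+2)$-dimensional SDE \eqref{eq-main-dis-1}, with the extra task of verifying that the Wonham filter component $e(t)$ stays in the simplex $\mathcal S_{n^\ast}$.

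First, since $f$ and $h$ are Lipschitz in $(s,i)$ uniformly in $x\in[0,1]$, and the drift and diffusion coefficients of the $e_k$-equations in \eqref{eq-main-dis-1} are polynomial in $e$, all coefficients of the system are locally Lipschitz on $\R_+^2\times\R^{n^\ast}$. Standard SDE theory (cf.\ \cite[Theorem 3.8 and Remark 3.10]{MAO06}) then yields a unique local solution $(S(t),I(t),e(t))$ up to an explosion time $\tau_e$. I will then rule out explosion in two stages: first show $e(t)\in\mathcal S_{n^\ast}$ for all $t<\tau_e$ a.s., then use a Lyapunov function to show $(S(t),I(t))$ does not explode.

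For the simplex invariance, set $\Sigma(t):=\sum_{k=1}^{n^\ast} e_k(t)$. Since $\{q_{ik}\}$ is a generator, $\sum_k q_{ik}=0$ for every $i$, and a direct computation gives $d\Sigma(t)=\Sigma(t)(\bar g(e(t))-\bar g(e(t)))\,dy(t)+0\cdot dt=0$, hence $\Sigma(t)\equiv 1$. For nonnegativity, one can apply It\^o's formula to $\ln e_k(t)$ (after introducing the stopping time $\inf\{t:e_k(t)\le \eps\}$ and letting $\eps\downarrow 0$) and use the fact that $\sum_{i\ne k}q_{ik}e_i(t)\ge 0$ on $\{e_k=0\}$ together with the Lipschitz/boundedness estimates on $\bar g$, to show each $e_k(t)$ remains strictly positive as long as it starts in the interior of the simplex, or at least remains nonnegative when it starts on the boundary; the argument is standard for Wonham filters. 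This guarantees $e(t)\in\mathcal S_{n^\ast}$ for all $t$ in the existence interval, so in particular $e(t)$ stays bounded. Given this, the Lyapunov function $V_1(s,i)=s+i$ satisfies $\Lom V_1\le a_1$ exactly as in Theorem \ref{thm-exi} (because $\sum_k f(m_k,s,i)e_k\le\max_k f(m_k,s,i)\le L_1(s+i)$ and similarly for $h$, but the $f,h$ terms cancel in the $s+i$ equation), so the Markov inequality argument in Theorem \ref{thm-exi} shows $\PP\{\tau_e=\infty\}=1$.

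The Markov--Feller property follows directly from the fact that \eqref{eq-main-dis-1} is an autonomous SDE driven by the independent Brownian motions $B_1,B_2,\bar W$, whose coefficients are locally Lipschitz and produce non-exploding solutions; by the truncation/localization argument in \cite[Theorem 5.1]{NYZ17} (already invoked in Theorem \ref{thm-markov}) one obtains the strong Markov and Feller property on the state space $\R_+^2\times\mathcal S_{n^\ast}$. Finally, positivity of $S(t)$ and $I(t)$ is proved by the same Lyapunov function $V_2(s,i)=(s-1-\ln s)+(i-1-\ln i)$ used in Theorem \ref{thm-markov}; the estimate $\Lom V_2-C_2 V_2\le C_3$ is unchanged because it only uses $f(x,0,i)=h(x,0,i)=0$ and Lipschitz continuity, both of which survive the replacement of $\int f(x,\cdot)\pi(dx)$ by $\sum_k f(m_k,\cdot)e_k$. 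Standard stopping-time arguments then give $\PP_{u,v,e}\{S(t)\wedge I(t)>0,\,t>0\}=1$ for $u,v>0$. The boundary cases ($v=0$, forcing $I(t)\equiv 0$; and $u=0$, $v\ge 0$, giving $S(t)>0$ for $t>0$) are handled exactly as in the proof of Theorem \ref{thm-markov} by localizing via $\tilde\tau_1=\inf\{t:S(t)+|I(t)-v|\ge\eps\}$ and applying the variation-of-constants formula to the $S$-equation.

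The main obstacle I anticipate is the verification of strict positivity (or at least nonnegativity) of each coordinate $e_k(t)$: although $\Sigma(t)\equiv 1$ is immediate, showing that the Wonham coordinates stay in the simplex requires a careful analysis of $\ln e_k(t)$ near the boundary, since the logarithm's diffusion coefficient $(g_k-\bar g(e))$ does not vanish there and one must control the drift $\sum_i q_{ik}e_i/e_k$ via the generator structure. Once this is established, the remaining parts reduce essentially word-for-word to the arguments already given for Theorems \ref{thm-exi} and \ref{thm-markov}.
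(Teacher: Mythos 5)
Your proposal is correct and takes essentially the same route as the paper, which simply states that the proof is identical to those of Theorems \ref{thm-exi} and \ref{thm-markov} and omits it. The additional care you take with the simplex invariance of the Wonham filter component $e(t)$ is a genuine detail the paper leaves implicit, and your treatment of it (conservation of $\sum_k e_k$ via the generator property and positivity via $\ln e_k$ with stopping times) is the standard and correct way to fill that gap.
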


\begin{proof}
	The proof of this Theorem is as same as that of Theorems \ref{thm-exi} and \ref{thm-markov}, and is thus omitted.
\end{proof}

To proceed,
we classify the persistence and extinction of  system \eqref{eq-main-dis-1} by
the following threshold $\lambda$:
\begin{equation}\label{eq-lambda-dis}
\lambda:=-c_2+\sum_{k=1}^{n^*}\mu^*_k\int_0^\infty f(m_k,y,0)\hat\mu(dy)+\sum_{k=1}^{n^*} m_k\mu^*_k\int_0^\infty h(m_k,y,0)\hat\mu(dy).
\end{equation}


\begin{thm}\label{thm-same}
The following results hold.
	\begin{itemize}
		\item [\rm{(i)}]
If $\lambda<0$ then for any initial point $(u,v,e)\in\mathbb{R}_+^{2,\circ}\times\mathcal S_{n^*}$,
the number of the infected individuals $I(t)$ tends to zero  at an exponential rate, that is, 
$$\PP_{u,v,e}\left\{\limsup\limits_{t\to\infty}\dfrac{\ln I(t)}{t}=\lambda\right\}=1,$$
and the susceptible class $S(t)$ converges weakly to the solution $\varphi(t)$ on the boundary.

\item [\rm{(ii)}] If $\lambda>0$, then for any initial  point $(u,v,e)\in \mathbb{R}_+^{2,\circ}\times\mathcal S_{n^*}$,
system \eqref{eq-main-dis-1} is permanent (in mean).
\item [\rm{(iii)}]
Moreover, the observable system \eqref{eq-main-dis-1} and original
system \eqref{eq-main-1} share the same threshold for the persistence and extinction.
	\end{itemize}
\end{thm}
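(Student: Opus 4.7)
The plan is to transfer the proofs of Theorems~\ref{R<1} and~\ref{R>1} almost verbatim to the finite-state setting, noting that every analytic ingredient used there has a cleaner discrete counterpart, and then to dispatch part (iii) by a short comparison of the two threshold formulas.

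For part (i), I would repeat the extinction argument leading to Proposition~\ref{lm3.1}. The required ingredients translate as follows: the moment estimate of Lemma~\ref{lem2.1} does not involve $\alpha(t)$ and holds unchanged; the comparison equation~\eqref{eq-bphi} and the strong law $\frac1t\int_0^t \varphi^{1+\hat p}(s)\,ds \to \int y^{1+\hat p}\hat\mu(dy)$ a.s.\ are identical; and the auxiliary estimate of Lemma~\ref{lem2.2} carries over with the filter term $\int_0^1 f(x,\cdot)\Pi_s(dx)$ replaced by $\sum_k f(m_k,\cdot)e_k(s)$. The only substantive point is that $(e(t),\varphi(t))$ must satisfy the ergodic identity
$$
\lim_{t\to\infty} \frac1t\int_0^t\Big[\sum_{k=1}^{n^*} f(m_k,\varphi(s),0)e_k(s) + \sum_{k=1}^{n^*} m_k h(m_k,\varphi(s),0)e_k(s)\Big]ds = \lambda+c_2
$$
almost surely. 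This follows from the ergodicity of the Wonham filter on $\mathcal S_{n^*}$ (a finite-dimensional analogue of Assumption~\ref{asp-2}, valid whenever the underlying chain is irreducible and the observation is nondegenerate, which is automatic here since $\sigma_W=1$ in~\eqref{eq-oby}) together with the fact that in stationarity $\E\, e_k(t)=\mu_k^*$, so the Ces\`aro limit of $\sum_k f(m_k,\varphi(s),0)e_k(s)$ coincides with $\sum_k \mu_k^* \int f(m_k,y,0)\hat\mu(dy)$. With this identity, the It\^o computation~\eqref{lni2} produces $\limsup_{t\to\infty}\frac{\ln I(t)}t=\lambda$, and the weak convergence $S(t)\Rightarrow \varphi(t)$ is then standard.

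For part (ii), the proof of Theorem~\ref{R>1} transfers line by line. One argues by contradiction, assuming no invariant probability measure of $(S(t),I(t),e(t))$ sits in $\R^{2,\circ}_+\times \mathcal S_{n^*}$; then the only invariant measure on the closure is $\hat\mu\times\bdelta\times \Psi^*$, where $\Psi^*$ is the (unique) stationary law of $e(t)$, whose barycenter in $\mathcal S_{n^*}$ is $(\mu_1^*,\dots,\mu_{n^*}^*)$. Invoking \cite[Lemma 3.4]{HN18} with the averages~\eqref{limity}--\eqref{limitx} rewritten as finite sums against $(\mu_k^*)$ yields $\lim_{t\to\infty}\Ei \frac{\ln I(t)}t=\lambda>0$, contradicting the elementary bound $\Ei \frac{\ln I(t)}t \le \Ei \frac{I(t)}t\to 0$ implied by Lemma~\ref{lem2.1}. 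Permanence in mean then follows from the moment bound and \cite[Lemma 3.4]{HN18} along a subsequence realizing the liminf, exactly as in Theorem~\ref{R>1}.

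For part (iii), the identity of thresholds is essentially a tautology once one writes both formulas in terms of $\mu^*$. The original (fully observed) system~\eqref{eq-main-1} has extinction/permanence threshold, as cited after~\eqref{eq-lambda} from \cite{DNY-SPA},
$$
\lambda_{\mathrm{orig}} = -c_2 + \int_0^\infty\!\!\int_0^1 f(x,y,0)\mu^*(dx)\hat\mu(dy) + \int_0^\infty\!\!\int_0^1 xh(x,y,0)\mu^*(dx)\hat\mu(dy).
$$
In the present finite-state setting $\mu^* = \sum_{k=1}^{n^*}\mu_k^*\delta_{m_k}$, so $\lambda_{\mathrm{orig}}$ collapses to precisely the right-hand side of~\eqref{eq-lambda-dis}, which by parts (i) and (ii) is the threshold for~\eqref{eq-main-dis-1}. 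The main subtlety I anticipate is rigorously justifying the ergodicity of the Wonham filter used in part (i) under Assumption~\ref{asp-2}; this is where one must invoke the finite-state analogue of \cite{Han09,Kun71}, but in the Wonham case the invariant measure is well understood and the argument is routine once the generator of the chain is irreducible.
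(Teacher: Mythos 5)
Your proposal follows exactly the route the paper takes: the paper's proof of parts (i) and (ii) simply states that the arguments of Theorems \ref{R<1} and \ref{R>1} carry over verbatim to the finite-state/Wonham-filter setting, and part (iii) is obtained, as you do, by observing that with $\mu^*=\sum_k\mu_k^*\delta_{m_k}$ the general threshold \eqref{eq-lambda} collapses to \eqref{eq-lambda-dis}, matching the threshold for \eqref{eq-main-1} from \cite{DNY-SPA}. Your write-up merely supplies more of the details (filter ergodicity, the averaging identity) that the paper leaves implicit, so the two proofs are essentially identical.
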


\begin{proof}
	The proofs of part (i) and (ii) in this Theorem are the same as that of Theorems \ref{R<1}, and \ref{R>1} and, therefore are omitted.
Part (iii) follows immediately from the formulation of $\lambda$ given in \eqref{eq-lambda-dis} and the threshold for system \eqref{eq-main-1} given in \cite{DNY-SPA}.
\end{proof}

%
%

%

\begin{rem}
Note that in Theorem \ref{thm-same}, we have collected several results. These results may be presented in separate theorems. Given that they are all related to the same hidden process
and that we have carried out an extensive analysis in the last section, it seemed reasonable to collect these results in one theorem.

	In Section \ref{sec:num}, we  present numerical examples for Theorem \ref{thm-same}. The reader can see that although the observable system \eqref{eq-main-dis-1} may not approximate well the original at every point in time, it preserves the longtime properties (permanence or extinction).
\end{rem}

\newcommand{\pre}{{\rm\tiny pre}}
\section{Discussion and Interpretation}\label{sec:disc}
When a pandemic arises, there are usually multiple options one can take in order to  control it. If we apply an extreme policy to control the disease transmission (i.e., we try to reduce the infection rate to be very small or almost $0$), we can certainly control the pandemic.
This can be seen
by looking at $\lambda$ defined in the previous section and noting that if $f(S,I)\approx 0$ then $\lambda\approx -c_2<0$.
However, this type of \textit{highly restrictive policy} may hurt the economy.
It is
important
to
balance public health and the economic issues.
In our context, this is equivalent to ensuring that $\lambda<0$, which ensures the pandemic is under control, but not making $\lambda$ too negative, since in the process of doing this (lockdowns, bankruptcy of businesses, unemployment) the economy could suffer significantly.

\begin{deff}
	We say a proposed threshold $\lambda_1$ is overcautious if it is greater than the exact threshold, that is $\lambda_1>\lambda$.
	We say a proposed threshold $\lambda_1$ is incautious if it is less than the exact threshold, that is $\lambda_1<\lambda$ .
\end{deff}

\begin{rem}
We say	a threshold $\lambda_1>\lambda$ (the exact one) is an overcautious proposed threshold 
because if this threshold is implemented,
we tend to
apply a policy to make $\lambda_1<0$. But this may not be necessary because the exact threshold is $\lambda<\lambda_1$.
	Conversely, a threshold $\lambda_1<\lambda$ is an incautious one because reducing $\lambda_1$ to be less 0 may not be enough and the pandemic would
still not be controlled.
\end{rem}

Let us consider the case when $\alpha(t)$ takes the finitely many values $0=m_1< m_2<\dots<m_{n^*}=1$. We note that our analysis is still true for the general case when $\alpha$ takes values in $[0,1]$.
Recall that we assume $\alpha(t)$ has a unique invariant (discrete) measure $(\mu_1^*,\dots,\mu_{n^*}^*)$.

Since
$\alpha(t)$ is not directly available, we have two options.
One option is to use filtering to estimate $\alpha(t)$ and then consider the corresponding system with filtering. It was shown that this method preserves the longtime behavior of the original system.
In particular, from Section \ref{sec:dis}
the exact threshold for this method is
$$
\lambda=-c_2+\sum_{k=1}^{n^*} \mu^*_k\int_0^\infty f(m_k,y,0)\hat\mu(dy)+\sum_{k=1}^{n^*} m_k\mu^*_k\int_0^\infty h(m_k,y,0)\hat\mu(dy).
$$

Another possible option is to estimate some prediction for $\alpha(t)$ and give that value for $\alpha(t)$.
Let $k_0\in\{1,2,\dots,n^*\}$. Assume that we estimate $\alpha(t)=m_{k_0}$ and consider the system
\begin{equation}\label{eq-main-1-alpha=sth}
\begin{cases}
dS(t)=\Big[a_1-b_1S(t)-I(t)f(m_{k_0},S(t),I(t))\\
\hspace{2cm}-\alpha_0I(t)h(m_{k_0},S(t),I(t))\Big]dt + \sigma_1 S(t) dB_1(t),\\[1ex]
dI(t)=\Big[-b_2I(t) + I(t)f(m_{k_0},S(t),I(t))\\
\hspace{2cm}+m_{k_0}I(t)h(m_{k_0},S(t),I(t))\Big]dt + \sigma_2I(t) dB_2(t).\\
\end{cases}
\end{equation}
Using the results from \cite{DNY-SPA}, the threshold for persistence and extinction of \eqref{eq-main-1-alpha=sth} is given by
$$
\lambda_{pre}=-c_2+\int_0^\infty f(m_{k_0},y,0)\hat\mu(dy)+m_{k_0}\int_0^\infty h(m_{k_0},y,0)\hat\mu(dy).
$$
The following
results tell us when $\lambda_{\pre}$ is an incautious or overcautious threshold.


\begin{prop}\label{thm-51}
	\begin{itemize}
		\item [\rm{(i)}]
	If
	$$
	\begin{aligned}
	&\mu_{k_0}^*\int_0^\infty (f(m_{k_0},y,0)+m_{k_0}h(m_{k_0},y,0))\hat\mu(dy)\\
	&\hspace{2cm}<\sum_{k=1}^{n^*}\mu^*_k\int_0^\infty f(m_k,y,0)\hat\mu(dy)+\sum_{k=1}^{n^*} m_k\mu^*_k\int_0^\infty h(m_k,y,0)\hat\mu(dy)
	\end{aligned}
	$$
	then $\lambda_{\pre}<\lambda$. As a result, $\lambda_{\pre}$ is an incautious threshold.
	\item [\rm{(ii)}]	If
	$$
	\begin{aligned}
	&\mu^*_{k_0}\int_0^\infty (f(m_{k_0},y,0)+m_{k_0}h(m_{k_0},y,0))\hat\mu(dy)\\
	&\hspace{2cm}>\frac 1{1-\mu_{n^*}^*}\Big(\sum_{k=1}^{n^*-1}\mu^*_k\int_0^\infty f(m_k,y,0)\hat\mu(dy)+\sum_{k=1}^{n^*-1} m_k\mu^*_k\int_0^\infty h(m_k,y,0)\hat\mu(dy) \Big)
	\end{aligned}
	$$
	then $\lambda_{\pre}>\lambda$. As a result, $\lambda_{\pre}$ is an overcautious threshold.
		\end{itemize}

\end{prop}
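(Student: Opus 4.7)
The proof reduces to a direct algebraic comparison of the two closed-form thresholds; no new dynamical input is needed, since Theorems \ref{R<1}--\ref{R>1} together with \cite{DNY-SPA} have already established that $\lambda$ and $\lambda_{\pre}$ govern permanence versus extinction for the respective systems \eqref{eq-main-dis-1} and \eqref{eq-main-1-alpha=sth}. The plan is to abbreviate
\[
C_k := \int_0^\infty f(m_k,y,0)\hat\mu(dy) + m_k\int_0^\infty h(m_k,y,0)\hat\mu(dy), \qquad k=1,\dots,n^*,
\]
so that \eqref{eq-lambda-dis} becomes $\lambda = -c_2 + \sum_{k=1}^{n^*}\mu_k^* C_k$ and the preceding definition reads $\lambda_{\pre} = -c_2 + C_{k_0}$. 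Subtracting, the proposition reduces to the purely algebraic implications (i) $\mu_{k_0}^* C_{k_0} < \sum_k \mu_k^* C_k \ \Rightarrow\ C_{k_0} < \sum_k \mu_k^* C_k$, and (ii) $\mu_{k_0}^* C_{k_0} > \tfrac{1}{1-\mu_{n^*}^*}\sum_{k=1}^{n^*-1}\mu_k^* C_k \ \Rightarrow\ C_{k_0} > \sum_k \mu_k^* C_k$.

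For part (i) I would split the weighted sum on the right as $\sum_k \mu_k^* C_k = \mu_{k_0}^* C_{k_0} + \sum_{k\ne k_0}\mu_k^* C_k$, invoke the identity $\sum_{k\ne k_0}\mu_k^* = 1-\mu_{k_0}^*$, and show that the hypothesis forces the non-$k_0$ mass $\sum_{k\ne k_0}\mu_k^* C_k$ to exceed the deficit $(1-\mu_{k_0}^*)C_{k_0}$ needed to lift the weighted average past $C_{k_0}$; the inequality $\mu_{k_0}^*\le 1$, together with the hypothesis, is what drives this step.

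For part (ii) I would clear the denominator using $1-\mu_{n^*}^* = \sum_{k<n^*}\mu_k^*$, rewriting the hypothesis as $(1-\mu_{n^*}^*)\mu_{k_0}^* C_{k_0} > \sum_{k<n^*}\mu_k^* C_k$. Since $\mu_{k_0}^*\le 1$, this also yields $(1-\mu_{n^*}^*)C_{k_0} > \sum_{k<n^*}\mu_k^* C_k$. Combining with the bound $\mu_{n^*}^* C_{n^*}\le \mu_{n^*}^* C_{k_0}$ (whose justification exploits $m_{n^*}=1$ and is the place where the special role of the top state enters) and adding the $k=n^*$ contribution back in then gives $C_{k_0} > \sum_k \mu_k^* C_k$, i.e.\ $\lambda_{\pre} > \lambda$.

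The main obstacle is the bookkeeping around the factor $\mu_{k_0}^*$ attached to the left-hand side of each hypothesis, which is what creates the asymmetry between the two parts and forces the $\frac{1}{1-\mu_{n^*}^*}$ weighting in (ii). No filtering-ergodicity, Lyapunov function, or strong-law input is required beyond what is already packaged into the closed forms for $\lambda$ and $\lambda_{\pre}$; once the expressions are organised in terms of the $C_k$'s as above, each implication is essentially a one-line convex-combination rearrangement.
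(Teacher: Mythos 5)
Your reduction to the quantities $C_k$ with $\lambda=-c_2+\sum_k\mu_k^*C_k$ and $\lambda_{\pre}=-c_2+C_{k_0}$ is the right normalization, and for what it is worth the paper supplies no proof of this proposition at all, so the comparison is against your argument alone. Unfortunately, both of the ``one-line rearrangements'' you defer to fail at exactly the step you leave unverified. For part (i): after your splitting, the hypothesis $\mu_{k_0}^*C_{k_0}<\sum_k\mu_k^*C_k=\mu_{k_0}^*C_{k_0}+\sum_{k\neq k_0}\mu_k^*C_k$ is literally equivalent to $\sum_{k\neq k_0}\mu_k^*C_k>0$, which gives no control on whether this mass exceeds the deficit $(1-\mu_{k_0}^*)C_{k_0}$; the sentence ``the hypothesis forces the non-$k_0$ mass to exceed the deficit'' is precisely the assertion to be proved, and it is false. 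Concretely, take $n^*=2$, $m_1=0$, $m_2=1$, $\mu^*=(1/2,1/2)$, $k_0=1$, $h\equiv 0$ and $f(x,s,i)=\beta(x)\min(s,M)$ with $\beta$ affine and chosen so that $C_1=10$, $C_2=1/10$ (this is compatible with Assumption \ref{asp-1}): the hypothesis of (i) holds ($5<5.05$) yet $C_{k_0}=10>5.05=\sum_k\mu_k^*C_k$, so $\lambda_{\pre}>\lambda$. Since $\mu_{k_0}^*\leq 1$ and $C_k\geq 0$, the printed hypothesis is a \emph{necessary} condition for $\lambda_{\pre}<\lambda$, not a sufficient one; the implication goes the wrong way, and the correct sufficient condition is $C_{k_0}<\frac{1}{1-\mu_{k_0}^*}\sum_{k\neq k_0}\mu_k^*C_k$, i.e.\ without the factor $\mu_{k_0}^*$ on the left.

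For part (ii), the bound $C_{n^*}\leq C_{k_0}$ that you invoke is not a consequence of $m_{n^*}=1$; under the monotonicity Assumption \ref{asp-natural} used in the sequel one in fact has the opposite inequality $C_{n^*}\geq C_{k_0}$. What rescues part (ii) is an observation you do not make: since $\frac{1}{1-\mu_{n^*}^*}\geq 1$ and the sum $\sum_{k=1}^{n^*-1}\mu_k^*C_k$ already contains the term $\mu_{k_0}^*C_{k_0}$ whenever $k_0<n^*$, the hypothesis of (ii) is vacuous unless $k_0=n^*$; in that remaining case $C_{n^*}=C_{k_0}$ trivially and the chain $\sum_{k<n^*}\mu_k^*C_k<(1-\mu_{n^*}^*)\mu_{n^*}^*C_{n^*}\leq(1-\mu_{n^*}^*)C_{n^*}$ yields $\lambda_{\pre}>\lambda$. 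So part (ii) is true but not for the reason you give, and part (i) is false as stated; your proposal cannot be completed without first correcting the statement (removing the factor $\mu_{k_0}^*$ from the left-hand sides, and in (i) replacing the full sum on the right by $\frac{1}{1-\mu_{k_0}^*}\sum_{k\neq k_0}\mu_k^*C_k$).
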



	It is natural to assume that $f(x,y,0)$ is an increasing function w.r.t. $x$ because the higher infection rate in the group of potentially infected individuals would make the disease spread faster.
	Note that this intuition is natural but not
always true because we are examining
functions at the boundary, i.e., there is no infected group.

\begin{asp}\label{asp-natural}
	For each fixed $y$, the functions $f(x,y,0)$ and $h(x,y,0)$ are increasing in $x$.
\end{asp}

Under this natural assumption, we will see that assuming that all potentially infected individuals are infected will lead to an overcautious policy. Conversely, it will be incautious if we assume that all individuals with hidden infection status are free of the disease.
We will make this analysis clearer in the following two subsections.

\subsection{The overcautious case: assuming that all individuals in the hidden group are infected}
Suppose we do not use the filtering to consider the observable problem, make the assumption $\alpha(t)=1, t\geq 0$ and consider the system \eqref{eq-main-1-alpha=sth} with $m_{k_0}=m_{n^*}=1$.
Under Assumption \ref{asp-natural}, this is an overcautious prediction.
The following theorem is consistent with this intuition.
%

\begin{prop}\label{thm-53}	
		Under Assumption \ref{asp-natural},
	\begin{equation}\label{asp-lambda-1}
	\begin{aligned}
	&\int_0^\infty (f(1,y,0)+h(1,y,0))\hat\mu(dy)\\
	&\hspace{1cm}>\frac 1{1-\mu_{n^*}^*}\Big(\sum_{k=1}^{n^*-1}\mu^*_k\int_0^\infty f(m_k,y,0)\hat\mu(dy)+\sum_{k=1}^{n^*-1} m_k\mu^*_k\int_0^\infty h(m_k,y,0)\hat\mu(dy)\Big)
	\end{aligned}
	\end{equation}
As a result, $\lambda_{\pre}>\lambda$ and $\lambda_{\pre}$ is an overcautious threshold.
\end{prop}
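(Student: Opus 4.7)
The plan is to verify the integral inequality \eqref{asp-lambda-1} directly from the monotonicity hypothesis and then invoke Proposition~\ref{thm-51}(ii), specialized to $k_0=n^*$ (so that $m_{k_0}=1$ and $\mu_{k_0}^*=\mu_{n^*}^*$), to translate that inequality into the conclusion $\lambda_{\pre}>\lambda$. To streamline the bookkeeping, I would introduce the shorthand
\[
F(x):=\int_0^\infty f(x,y,0)\,\hat\mu(dy), \qquad H(x):=\int_0^\infty h(x,y,0)\,\hat\mu(dy),
\]
so that \eqref{asp-lambda-1} reads $F(1)+H(1) > \tfrac{1}{1-\mu_{n^*}^*}\sum_{k=1}^{n^*-1}\mu_k^*\bigl[F(m_k)+m_k H(m_k)\bigr]$.

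The heart of the argument is a pointwise comparison at the boundary $i=0$. By Assumption~\ref{asp-natural}, the maps $f(\cdot,y,0)$ and $h(\cdot,y,0)$ are nondecreasing, so integrating against $\hat\mu$ gives $F(m_k)\le F(1)$ and $H(m_k)\le H(1)$. Moreover, because $m_1<\cdots<m_{n^*}=1$, we have $m_k\le m_{n^*-1}<1$ for every $k<n^*$. Plugging these bounds into the sums yields
\[
\sum_{k=1}^{n^*-1}\mu_k^* F(m_k)\le (1-\mu_{n^*}^*)F(1), \qquad \sum_{k=1}^{n^*-1} m_k\mu_k^* H(m_k)\le m_{n^*-1}(1-\mu_{n^*}^*)H(1).
\]
Dividing by $1-\mu_{n^*}^*$ shows that the right-hand side of the target inequality is bounded above by $F(1)+m_{n^*-1}H(1)$, which is strictly less than $F(1)+H(1)$ because $m_{n^*-1}<1$ and the hidden class is modelled as a genuine transmission channel (so $H(1)>0$). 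This establishes \eqref{asp-lambda-1}, and Proposition~\ref{thm-51}(ii) then delivers $\lambda_{\pre}>\lambda$.

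The main (and essentially only) obstacle is securing strict rather than merely weak inequality. All of the slack in the estimate comes from the factor $m_{n^*-1}<1$ multiplying $H(1)$; so if the hidden incidence were absent on the boundary ($H\equiv 0$), strictness would instead have to be extracted from the $F$-term, which would require genuinely strict monotonicity of $f(\cdot,y,0)$ at some state $m_k$ with $k<n^*$ and $\mu_k^*>0$ on a set of positive $\hat\mu$-measure. In the SIR context at hand the hidden class is nontrivial by construction, so $H(1)>0$ and no hypothesis beyond Assumption~\ref{asp-natural} is needed.
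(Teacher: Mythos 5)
Your argument is correct and is exactly the computation the paper has in mind: the paper's proof of Proposition~\ref{thm-53} simply states that \eqref{asp-lambda-1} ``follows immediately from Assumption~\ref{asp-natural}'' and omits all details, and your monotonicity-plus-summation estimate (using $\sum_{k=1}^{n^*-1}\mu_k^*=1-\mu_{n^*}^*$ and $m_k\le m_{n^*-1}<1$) is the intended verification, with the added merit that you correctly isolate where strictness comes from (the factor $m_{n^*-1}<1$ on the $H$-term, requiring $H(1)>0$), a point the paper glosses over. One small remark: \eqref{asp-lambda-1} is by direct algebra \emph{equivalent} to $\lambda_{\pre}>\lambda$ for $k_0=n^*$, so you do not actually need to route through Proposition~\ref{thm-51}(ii), whose hypothesis as printed carries an extra factor $\mu_{k_0}^*$ on the left-hand side and is therefore strictly stronger than \eqref{asp-lambda-1}.
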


\begin{proof}
	The estimate \eqref{asp-lambda-1} follows immediately from Assumption \ref{asp-natural}. We omit the details here.
\end{proof}

\subsection{The incautious case: assuming that all individuals in the hidden group are not infected}
If we make the assumption that $\alpha(t)=0, t\geq 0$ and consider the system \eqref{eq-main-1-alpha=sth} with $m_{k_0}=m_1=0$, this is an incautious prediction (under Assumption \ref{asp-natural}).
The following result is consistent with this fact.
%

\begin{prop}\label{thm-54}
	Under Assumption \ref{asp-natural},
	$$
	\begin{aligned}
	&\int_0^\infty (f(0,y,0)+h(0,y,0))\hat\mu(dy)\\
	&\hspace{2cm}<\frac 1{1-\mu_1^*}\Big(\sum_{k=2}^{n^*}\mu^*_k\int_0^\infty f(m_k,y,0)\hat\mu(dy)+\sum_{k=2}^{n^*} m_k\mu^*_k\int_0^\infty h(m_k,y,0)\hat\mu(dy)\Big)
	\end{aligned}
	$$
As a result, $\lambda_{\pre}<\lambda$ and $\lambda_{\pre}$ is an incautious threshold.
\end{prop}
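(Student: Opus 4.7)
The plan is to mimic the proof of Proposition \ref{thm-53} with the roles of the extreme states exchanged: instead of exploiting $m_{n^*}=1$ at the top, we exploit $m_1=0$ at the bottom. Abbreviate $A_k:=\int_0^\infty f(m_k,y,0)\,\hat\mu(dy)$ and $B_k:=\int_0^\infty h(m_k,y,0)\,\hat\mu(dy)$, so that \eqref{eq-lambda-dis} reads $\lambda+c_2=\sum_{k=1}^{n^*}\mu_k^*(A_k+m_k B_k)$, whereas the prediction threshold with $m_{k_0}=m_1=0$ reduces to $\lambda_\pre+c_2=A_1$ (since $m_1 B_1=0$). In particular, $\lambda_\pre<\lambda$ is equivalent to $(1-\mu_1^*)A_1<\sum_{k=2}^{n^*}\mu_k^* A_k+\sum_{k=2}^{n^*}m_k\mu_k^* B_k$ after subtracting the $k=1$ contribution from $\lambda+c_2$.

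The heart of the argument is the pointwise monotonicity supplied by Assumption \ref{asp-natural}. Since $m_1=0\leq m_k$ for every $k\geq 2$, the monotonicity of $x\mapsto f(x,y,0)$ and $x\mapsto h(x,y,0)$ gives $f(0,y,0)\leq f(m_k,y,0)$ and $h(0,y,0)\leq h(m_k,y,0)$ for each $y\geq 0$. Integrating against $\hat\mu$ yields $A_1\leq A_k$ and $B_1\leq B_k$ for all $k\geq 2$. Multiplying the first family of bounds by $\mu_k^*$, summing over $k=2,\dots,n^*$, and using $\sum_{k=2}^{n^*}\mu_k^*=1-\mu_1^*$ produces $(1-\mu_1^*)A_1\leq\sum_{k=2}^{n^*}\mu_k^* A_k$. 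Adding the obviously nonnegative contribution $\sum_{k=2}^{n^*}m_k\mu_k^* B_k$ to the right-hand side and dividing by $1-\mu_1^*$ already delivers $A_1\leq\frac{1}{1-\mu_1^*}\sum_{k=2}^{n^*}(\mu_k^* A_k+m_k\mu_k^* B_k)$, which unwinds to $\lambda_\pre\leq\lambda$.

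To promote the $\leq$'s into the strict inequality asserted in the proposition, and to absorb the $B_1$ summand into the left-hand side, I would isolate the $k=n^*$ contribution, where $m_{n^*}=1$ forces $m_{n^*}B_{n^*}=B_{n^*}\geq B_1$; this supplies the extra $B_1$ worth of slack needed to match the displayed form, in full analogy with the role played by the indices $k=1,\dots,n^*-1$ in the proof of Proposition \ref{thm-53}. The delicate step I expect to require the most care is precisely this strictness: as in Proposition \ref{thm-53} one would argue that the ergodicity of $\alpha(t)$ (Assumption \ref{asp-2}) places positive mass on at least two states and that the natural monotonicity is genuinely strict at some such state, so that at least one of the integrated comparisons is strict on a set of positive $\hat\mu$-measure. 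The $<$ then propagates through the weighted sum and yields both the displayed inequality and the conclusion $\lambda_\pre<\lambda$, identifying $\lambda_\pre$ as an incautious threshold.
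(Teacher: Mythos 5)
Your first two paragraphs are sound and already contain the whole of the substantive argument (the paper omits the proof of this proposition entirely, and for the companion Proposition \ref{thm-53} says only that the estimate ``follows immediately'' from Assumption \ref{asp-natural}): since $m_1=0$, the prediction threshold is $\lambda_{\pre}=-c_2+A_1$ in your notation, with no $h$-contribution at all, and $\lambda_{\pre}<\lambda$ reduces exactly to $(1-\mu_1^*)A_1<\sum_{k\geq 2}\mu_k^*A_k+\sum_{k\geq 2}m_k\mu_k^*B_k$, which the monotonicity of $f$ delivers (up to strictness) without ever touching $B_1$.

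The genuine gap is in your third paragraph, and it is not repairable: the displayed inequality as printed, with $f(0,y,0)+h(0,y,0)$ on the left, is not a consequence of Assumption \ref{asp-natural}. To absorb the extra $(1-\mu_1^*)B_1$ you would need $\sum_{k\geq 2}m_k\mu_k^*B_k\geq B_1\sum_{k\geq 2}\mu_k^*$, and since $B_k\geq B_1$ only yields $\sum_{k\geq 2}m_k\mu_k^*B_k\geq B_1\sum_{k\geq 2}m_k\mu_k^*$, you would need $\sum_{k=2}^{n^*-1}(1-m_k)\mu_k^*\leq 0$. Isolating the $k=n^*$ term does not supply this: it covers only the $\mu_{n^*}^*$ portion of the mass, and every intermediate state with $m_k<1$ and $\mu_k^*>0$ leaves a deficit. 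Concretely, with $n^*=3$, $m=(0,\tfrac12,1)$, $\mu^*=(\tfrac13,\tfrac23-\delta,\delta)$, and $f,h$ (nearly) constant in $x$ with $A_k\approx B_k\approx 1$, the right-hand side of the display tends to $\tfrac32$ while the left-hand side is $2$. The display is simply misstated in the paper: the left-hand side dictated by $\lambda_{\pre}+c_2$ is $\int_0^\infty f(0,y,0)\hat\mu(dy)$ alone (the asymmetry with Proposition \ref{thm-53} comes from $m_{n^*}=1$ versus $m_1=0$), and your second paragraph already proves that corrected display and hence $\lambda_{\pre}\leq\lambda$. Your caution about strictness is also warranted: neither Assumption \ref{asp-natural} (if ``increasing'' is read as non-decreasing, as in Assumption \ref{asp-1}) nor Assumption \ref{asp-2} excludes $f$ constant in $x$ and $h\equiv 0$, in which case $\lambda_{\pre}=\lambda$; the strict inequality requires strict monotonicity at some state carrying positive invariant mass, which the paper's hypotheses do not guarantee.
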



\section{Examples and Simulations}\label{sec:exp}
\subsection{A Simple Example}
In this section, we consider a simple example.
Assume that all the individuals in the hidden class have the same status (susceptible or infected).
In other words, the signal Markov process $\alpha(t)$ has only two possible states, $0$ or $1$, (corresponding to the case that all individuals in hidden class are disease-free or infected, respectively).
Assume that $\alpha(t)$ has the generator
$$
Q=
\begin{bmatrix}
-q_1 &q_1\\
q_2&-q_2
\end{bmatrix}.
$$
We can only observe $\alpha(t)$ via the observation process $y(t)$ given by
$$
dy(t)=g(\alpha(t))dt+dW(t),\quad y(0)=0,
$$
where $g:\{0,1\}\to\R$ and set $g_1=g(0)$, $g_2=g(1)$. Let $e(t)=\E[\1_{\{\alpha(t)=0\}}|\F_t^y]$.

The dynamics of this epidemic system under the (Wonham) filter is described by
\begin{equation}\label{eq-ex-1}
\begin{cases}
dS(t)=\Big[a_1-b_1S(t)-I(t)f(0,S(t),I(t))e(t)-I(t)f(1,S(t),I(t))(1-e(t))\\
\hspace{2cm}-I(t)h(1,S(t),I(t))(1-e(t))\Big]dt + \sigma_1 S(t) dB_1(t),\\[1ex]
dI(t)=\Big[-b_2I(t) + I(t)f(0,S(t),I(t))e(t)+I(t)f(1,S(t),I(t))(1-e(t))\\
\hspace{2cm}+I(t)h(1,S(t),I(t))(1-e(t))\Big]dt + \sigma_2I(t) dB_2(t),\\
de(t)=[q_2-(q_1+q_2)e(t)]dt+(g_1-g_2)e(t)(1-e(t))d\bar W(t).
\end{cases}
\end{equation}
We will also assume that $q_1,q_2>0$ and $g:=g_1-g_2\neq 0$ since the other cases are trivial.
Consider the equation of the third component
\begin{equation}\label{eq-e}
de(t)=[q_2-(q_1+q_2)e(t)]dt+ge(t)(1-e(t))d\bar W(t),\quad e(0)\in [0,1].
\end{equation}
By using the Lyapunov functional method as in Theorem \ref{thm-exi}, we obtain the following result.

\begin{thm}
	For any initial value in $[0,1]$, the equation \eqref{eq-e} has a unique solution and
	$$\PP\{e(t)\in (0,1), \forall t>0\}=1.$$
\end{thm}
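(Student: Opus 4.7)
The plan is to follow the template of Theorem \ref{thm-markov}. Because the drift $e \mapsto q_2 - (q_1+q_2)e$ and diffusion $e \mapsto g\,e(1-e)$ of \eqref{eq-e} are locally Lipschitz and globally bounded on $[0,1]$, standard SDE theory yields a unique strong solution with no finite-time explosion, defined up to the exit time $\eta_e := \inf\{t \geq 0 : e(t) \notin (0,1)\}$. The entire content of the theorem therefore reduces to showing $\eta_e = \infty$ almost surely, i.e., the process never reaches $0$ or $1$.

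For an interior initial value $e(0) \in (0,1)$, I would introduce the Lyapunov function $V(e) = -\ln e - \ln(1-e)$, which is $C^2$ on $(0,1)$ and diverges at both endpoints. A direct calculation gives
$$
\mathcal{L} V(e) = -\frac{q_2}{e} - \frac{q_1}{1-e} + 2(q_1+q_2) + \frac{g^2}{2}\bigl(e^2 + (1-e)^2\bigr),
$$
so the crucial feature is that the two singular terms carry the favorable negative sign, thanks to $q_1, q_2 > 0$. Consequently $\mathcal{L} V$ is bounded above on $(0,1)$ by a constant $K$. Defining $\eta_k := \inf\{t : e(t) \notin (1/k,\, 1 - 1/k)\}$, It\^o's formula between $0$ and $t \wedge \eta_k$ yields $\E V(e(t \wedge \eta_k)) \leq V(e(0)) + Kt$; since $\min\{V(1/k), V(1-1/k)\}$ grows like $\ln k$, Markov's inequality forces $\PP(\eta_k \leq t) \to 0$. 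Hence $\eta_e = \infty$ almost surely, which is exactly the analog of the step carried out for $V_2$ in Theorem \ref{thm-markov}.

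The boundary cases $e(0) \in \{0,1\}$ are handled separately, mirroring the treatment of $u=0$ at the end of the proof of Theorem \ref{thm-markov}. At $e=0$ the drift equals $q_2 > 0$ while the diffusion vanishes, so by continuity there exists a random time $\tilde\tau > 0$ on which the drift remains above $q_2/2$; the variation-of-constants formula then forces $e(t) > 0$ on $(0, \tilde\tau]$. A symmetric argument using the drift value $-q_1 < 0$ handles $e(0) = 1$. The strong Markov property then reduces back to the interior case already settled.

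The only real obstacle is verifying that $\mathcal{L} V$ is genuinely bounded above in spite of the singularities of $V$ at both endpoints of $(0,1)$. This works precisely because the two restoring drift terms have the correct signs at the respective endpoints, which rests on the standing assumption $q_1, q_2 > 0$; were either parameter to vanish, the corresponding boundary would become attainable and the conclusion of the theorem would fail.
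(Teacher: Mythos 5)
Your argument is correct and is exactly the route the paper intends: the paper's ``proof'' consists of the single remark that the result follows ``by using the Lyapunov functional method as in Theorem~\ref{thm-exi},'' and your choice $V(e)=-\ln e-\ln(1-e)$ is the natural analogue of the function $V_2$ used in Theorem~\ref{thm-markov}, with the computation of $\mathcal{L}V$, the sign condition $q_1,q_2>0$, and the boundary-start argument all checking out. Your version is in fact slightly cleaner than the template, since here $\mathcal{L}V$ is genuinely bounded above on $(0,1)$ and no exponential correction term $e^{-C_2t}$ is needed.
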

Now, by solving the Fokker-Plank equation, equation \eqref{eq-e} has a unique invariant measure supported in $[0,1]$ with density given by
$$
\phi^*(x)=Ce^{-\frac{d_1}{1-x}}(1-x)^{2(d_1-d_2-1)} e^{-\frac{d_2}{x}}x^{2(d_2-d_1-1)},\;x\in(0,1),
$$
where $d_1=\frac{q_1}{g^2}$,
$d_2=\frac{q_2}{g^2}$, and
$C$ is a normalizing constant.

As developed in the main results, the threshold $\lambda$ is defined by
\begin{equation}\label{eq-ex-lambda}
\begin{aligned}
\lambda
=&-c_2+ \frac{q_2}{q_1+q_2}\int_0^{\infty}f(0,y,0)\hat\mu(dy)\\
&+\frac{q_1}{q_1+q_2}\int_0^{\infty}f(1,y,0)\hat\mu(dy)+\frac{q_1}{q_1+q_2}\int_0^\infty h(1,y,0)\hat\mu(dy),
\end{aligned}
\end{equation}
where
$\hat\mu$ is the invariant measure with the density given by \eqref{hpp}.

\begin{thm} Consider system \eqref{eq-ex-1} and $\lambda$ as in \eqref{eq-ex-lambda}.
	\begin{itemize}
		\item
		If $\lambda<0$ then for any initial point $(u,v,e_0)\in\mathbb{R}_+^{2,\circ}\times\mathcal S_{2}$,
		the number of the infected individuals $I(t)$ tends to zero at an exponential rate, i.e., 
		$$\PP_{u,v,e_0}\left\{\limsup\limits_{t\to\infty}\dfrac{\ln I(t)}{t}\leq \lambda\right\}=1,$$
		and the susceptible class $S(t)$ converges weakly to the solution $\varphi(t)$ on the boundary.
		\item If $\lambda>0$, then for any initial  point $(u,v,e_0)\in \mathbb{R}_+^{2,\circ}\times\mathcal S_{2}$, all invariant probability measures of
		the solution $(S(t),I(t),e(t))$
		 concentrate on $\mathbb{R}_+^{2,\circ}\times[0,1]$.
		Moreover, the system \eqref{eq-ex-1} is permanent.
	\end{itemize}
\end{thm}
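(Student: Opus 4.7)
The plan is to recognize this theorem as a direct specialization of Theorem \ref{thm-same} (equivalently, of Theorems \ref{R<1} and \ref{R>1}) to the two-state Wonham-filter setting. The work reduces to three tasks: verify the standing assumptions for the signal $\alpha(t)$, identify the invariant measure of the filter together with its barycenter, and check that the explicit threshold in \eqref{eq-ex-lambda} agrees with the general formula \eqref{eq-lambda-dis}.

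First I would verify Assumption \ref{asp-2}. The signal $\alpha(t)$ is a continuous-time Markov chain on $\{0,1\}$ with generator $Q$ and $q_1,q_2>0$, hence irreducible. Standard finite-state Markov chain theory yields the unique invariant probability $\mu^*=(\mu_1^*,\mu_2^*)=\bigl(\tfrac{q_2}{q_1+q_2},\tfrac{q_1}{q_1+q_2}\bigr)$, with exponentially fast convergence of $P(t,x,\cdot)$ to $\mu^*$ in total variation. Because $g=g_1-g_2\neq 0$, the observation is nondegenerate, so the general Wonham-filter results of Section \ref{sec:dis} apply: the filter $e(t)$ is a $[0,1]$-valued Markov--Feller process with a unique invariant probability measure $\Phi^*$, whose barycenter equals $\mu_1^*$.

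Next I would make $\Phi^*$ explicit and reconcile it with \eqref{eq-ex-lambda}. Equation \eqref{eq-e} is a scalar diffusion on $(0,1)$ with diffusion coefficient $g^2 e^2(1-e)^2$ (strictly positive in the interior) and drift $q_2-(q_1+q_2)e$; as already noted in the excerpt, with probability one the solution remains in $(0,1)$ for all $t>0$. Solving the stationary Fokker--Planck equation yields, up to normalization, the stated density $\phi^*$; integrability near the endpoints is secured by the exponential factors $e^{-d_1/(1-x)}$ and $e^{-d_2/x}$, which dominate the polynomial factors regardless of the signs of $2(d_2-d_1-1)$ and $2(d_1-d_2-1)$. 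Uniqueness of the stationary measure together with the Feller property identifies $\phi^*(x)\,dx$ with $\Phi^*$. Substituting $n^*=2$, $m_1=0$, $m_2=1$, and $\mu_1^*,\mu_2^*$ above into \eqref{eq-lambda-dis}, the $m_1=0$ term in the $h$-sum drops out and the remaining three terms reproduce \eqref{eq-ex-lambda} exactly.

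With the threshold identified and the standing hypotheses verified, the conclusions follow by invoking Theorem \ref{thm-same}: part (i), applied with $\lambda<0$, gives the exponential decay $\limsup_{t\to\infty}\tfrac{\ln I(t)}{t}\leq \lambda$ almost surely together with the convergence of $S(t)$ to $\varphi(t)$; part (ii), applied with $\lambda>0$, delivers permanence and concentration of all invariant probability measures of $(S(t),I(t),e(t))$ on $\R_+^{2,\circ}\times[0,1]$. The only step requiring genuine computation---and hence the closest thing to a technical obstacle---is the explicit verification of integrability and the barycenter identity for $\phi^*$ at the two boundary singularities; everything substantive (well-posedness, positivity, strong Markov/Feller property, Lyapunov-based extinction analysis, and invasion-type permanence) transfers verbatim from Sections \ref{sec:thres} and \ref{sec:lon}.
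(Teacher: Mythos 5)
Your proposal is correct and follows exactly the route the paper intends: the paper states this theorem without proof, treating it as the $n^*=2$, $\{m_1,m_2\}=\{0,1\}$ specialization of Theorem \ref{thm-same}, with the invariant law $\mu^*=(q_2/(q_1+q_2),\,q_1/(q_1+q_2))$ substituted into \eqref{eq-lambda-dis} to recover \eqref{eq-ex-lambda}. Your additional checks (irreducibility giving Assumption \ref{asp-2}, nondegeneracy of the observation, integrability of $\phi^*$ at the endpoints) only make explicit what the paper leaves implicit, so there is nothing to correct.
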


\subsection{Numerical Examples}\label{sec:num}
In this section, we consider a simple example and provide some numerical simulations.
Consider the equation \eqref{eq-ex-1} with $f(x,s,i)=m_1(x)s$,
 $h(x,s,i)=\frac{m_2(x)s}{1+s+i}$, $g(x)=x$, ($x\in\{0,1\}$), i.e., consider
\begin{equation}\label{eq-ex-2}
\begin{cases}
dS(t)=\Big[a_1-b_1S(t)-m_1(0)S(t)I(t)e(t)-m_1(1)S(t)I(t)(1-e(t))
\\
\hspace{2cm}-\dfrac{m_2(1)(1-e(t))S(t)I(t)}{1+S(t)+I(t)}\Big]dt + \sigma_1 S(t) dB_1(t),\\[2ex]
dI(t)=\Big[-b_2I(t) + m_1(0)S(t)I(t)e(t)+m_1(1)S(t)I(t)(1-e(t))\\
\hspace{2cm}+\dfrac{m_2(1)(1-e(t))S(t)I(t)}{1+S(t)+I(t)}\Big]dt + \sigma_2I(t) dB_2(t),\\
de(t)=[q_2-(q_1+q_2)e(t)]dt+e(t)(1-e(t))d\bar W(t).
\end{cases}
\end{equation}
The above is the corresponding system
with a filtering of hidden Markov chain $\alpha(t)$, whereas
the following
system is one under hidden process $\alpha(t)$
\begin{equation}\label{eq-ex-3}
\begin{cases}
dS(t)=\Big[a_1-b_1S(t)-m_1(\alpha(t))S(t)I(t)\\
\hspace{2cm}-\dfrac{\alpha(t)m_2(\alpha(t))S(t)I(t)}{1+S(t)+I(t)}\Big]dt + \sigma_1 S(t) dB_1(t),\\[2ex]
dI(t)=\Big[-b_2I(t) + m_1(\alpha(t))S(t)I(t)\\
\hspace{2cm}+\dfrac{\alpha(t)m_2(\alpha(t))S(t)I(t)}{1+S(t)+I(t)}\Big]dt + \sigma_2I(t) dB_2(t).\\
\end{cases}
\end{equation}
Here $\alpha(t)$ is the Markov chain taking values in $\{0,1\}$ with the generator $Q$.

Suppose one does not use the filtering to consider the corresponding observable system \eqref{eq-ex-2}, and considers instead the system under some
predictions for $\alpha(t)$.
If one uses the incautious prediction and assumes that $\alpha(t)=0$ (i.e., considers all individuals in the hidden group not to be infected), the corresponding system is
\begin{equation}\label{eq-ex-4}
\begin{cases}
dS(t)=\Big[a_1-b_1S(t)-m_1(0)S(t)I(t)\Big]dt + \sigma_1 S(t) dB_1(t),\\[2ex]
dI(t)=\Big[-b_2I(t) + m_1(0)S(t)I(t)\Big]dt + \sigma_2I(t) dB_2(t).\\
\end{cases}
\end{equation}

If one uses the overcautious prediction and assumes that $\alpha(t)=1$ (i.e., considers all individuals in the hidden group to be infected), the corresponding system is
 \begin{equation}\label{eq-ex-5}
 \begin{cases}
 dS(t)=\Big[a_1-b_1S(t)-m_1(1)S(t)I(t)-\dfrac{m_2(1)S(t)I(t)}{1+S(t)+I(t)}\Big]dt + \sigma_1 S(t) dB_1(t),\\[2ex]
 dI(t)=\Big[-b_2I(t) + m_1(1)S(t)I(t)+\dfrac{m_2(1)S(t)I(t)}{1+S(t)+I(t)}\Big]dt + \sigma_2I(t) dB_2(t).\\
 \end{cases}
 \end{equation}

\begin{example}
	Consider \eqref{eq-ex-2} with
	$a_1=0.5,
	b_1=1,
	\sigma_1=1,
	b_2=2,
	\sigma_2=0.5,
	m_1(0)=0.1,
	m_1(1)=4,
	m_2=0.1,
	q_1=5,
	q_2=25.
	$
Our computation shows that
$\lambda=-1.7252$, the (exact) threshold determining the longtime behavior (persistence and extinction) for both systems \eqref{eq-ex-2} and \eqref{eq-ex-3}.
Similarly, we can compute
$\lambda_0=-2.0750$, the threshold for system \eqref{eq-ex-4} and
$\lambda_1=0.0241$, the threshold for system \eqref{eq-ex-5}.

Note that $\lambda_1>\lambda>\lambda_0$ and as a result $\lambda_1$ is an overcautious threshold. In this case, if we use system \eqref{eq-ex-5}, we may conclude that the disease may not be controlled although it will indeed be controlled. Some unnecessarily restrictive policy might be chosen and this might lead to economic downturns.
Conversely, $\lambda_0$ is an incautious threshold which would lead to overly optimistic expectations.

	As our theoretical results show the number of infected $I(t)$ will tend to $0$ as $t\to\infty$, i.e., the infected group goes extinct.
	We have also shown that  systems \eqref{eq-ex-2} and \eqref{eq-ex-3} have the same longtime behavior.
	We provide the numerical simulations for this example in Figure \ref{fig-1}.
	
	\begin{figure}[h!tb]
		\centering     
		\subfigure{\includegraphics[width=.5\textwidth]{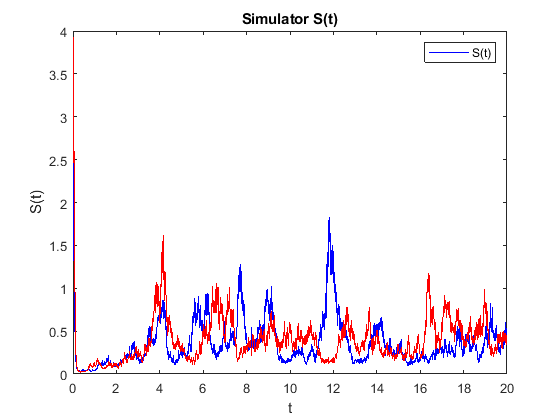}}\hspace{-1.5em}
		\subfigure{\includegraphics[width=.5\textwidth]{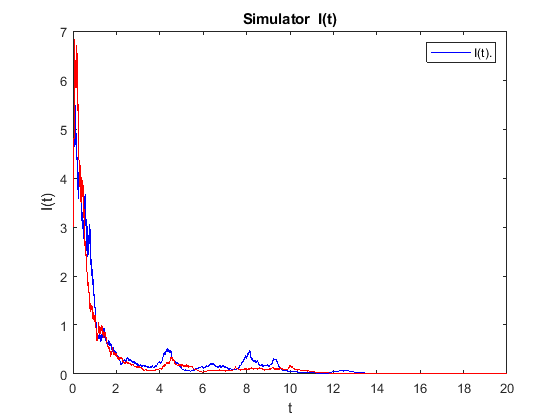}}
		\subfigure{\includegraphics[width=.5\textwidth]{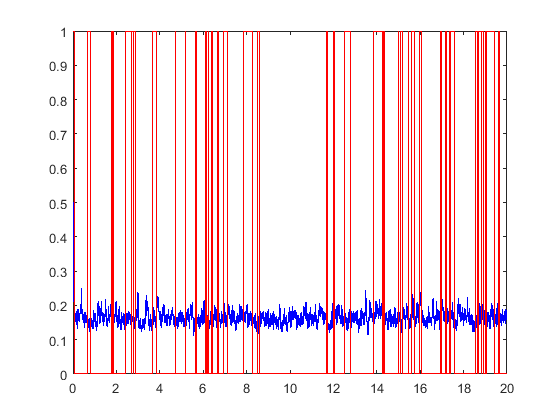}}
		\caption{Left top subfigure:
Sample paths of $S(t)$, in which the red curve is for
system under the hidden $\alpha(t)$ in \eqref{eq-ex-3}, the blue curve corresponding to the known $\alpha(t)$ in
\eqref{eq-ex-2}. Right top subfigure: Sample paths of $I(t)$, the red curve is
for
system \eqref{eq-ex-3} with hidden state, the blue curve is
that of \eqref{eq-ex-2} with known $\alpha(t)$. Bottom
subfigure:
the sample path of the Markov chains, the red is of the signal process $\alpha(t)$, the blue is its filter $e(t)$.}\label{fig-1}
	\end{figure}
\end{example}

\begin{example}
	Consider \eqref{eq-ex-2} with
	$a_1=10,
	b_1=1,
	\sigma_1=1,
	b_2=3,
	\sigma_2=1,
	m_1(0)=0.1,
	m_1(1)=2,
	m_2(1)=0.1,
	q_1=10,
	q_2=1.
	$
Direct computations yield
	$\lambda=14.8522$, the (exact) threshold determining the longtime behavior (persistence and extinction) for both systems \eqref{eq-ex-2} and \eqref{eq-ex-3}.
	Similarly, we can compute
	$\lambda_{0}=-2.5000$, the threshold for system \eqref{eq-ex-4}
	and $\lambda_1=16.5874$, the threshold for system \eqref{eq-ex-5}.
Because $\lambda_1>\lambda>\lambda_0$, we note that $\lambda_1$ is an overcautious threshold. Conversely, $\lambda_0$ is an incautious threshold. [It is readily seen that  the system is actually permanent, i.e., the disease will not be controlled but $\lambda_0$ recommends the disease will be extinct.
Conversely, $\lambda_1$ is an overcautious threshold which would lead to overly pessimistic expectations.]
	
	Applying our theoretical results to this example, we get that $I(t)$ never converges to $0$ and the system is permanent.
		As before, the systems \eqref{eq-ex-2} and \eqref{eq-ex-3} have the same longtime behavior.
	The numerical simulations of this example are provided in Figures \ref{fig-2} and \ref{fig-3}.
	\begin{figure}[h!tb]
		\centering     
		\subfigure{\includegraphics[width=.5\textwidth]{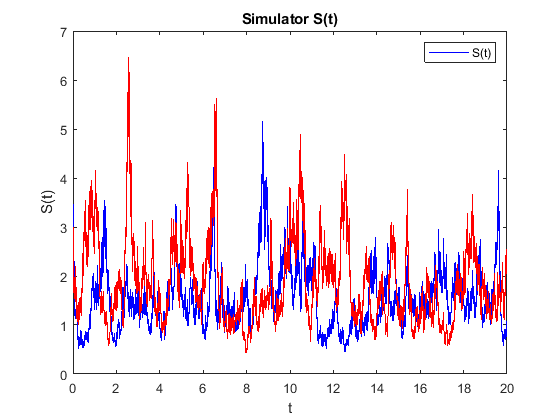}}\hspace{-1.5em}
		\subfigure{\includegraphics[width=.5\textwidth]{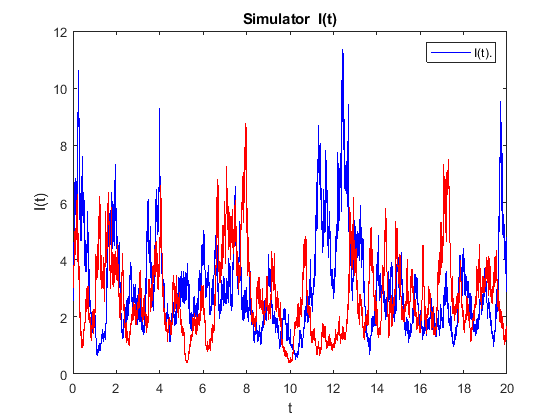}}
		\subfigure{\includegraphics[width=.5\textwidth]{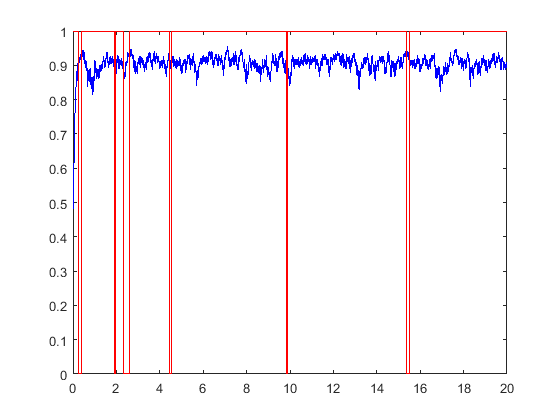}}
		\caption{Left top subfigure:
Sample path of $S(t)$, the red curve is for the
system under noisy observation of $\alpha(t)$, the blue is
the
system with precise known value of $\alpha(t)$. Right top subfigure:
Sample path of $S(t)$, the red curve is for
system under noisy observation, the blue curve is
for system with precise known value of $\alpha(t)$. Bottom subfigure:
the sample path of the Markov chain, the red curve is of the signal process, the blue curve is its filter.}\label{fig-2}
	\end{figure}

\begin{figure}[h!tb]
	\centering     
	\subfigure{\includegraphics[width=.5\textwidth]{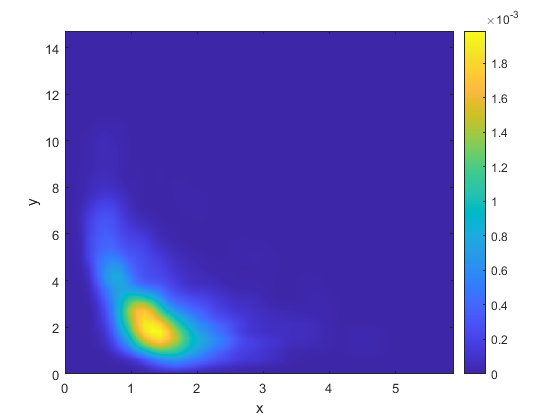}}\hspace{-1.5em}
	\subfigure{\includegraphics[width=.5\textwidth]{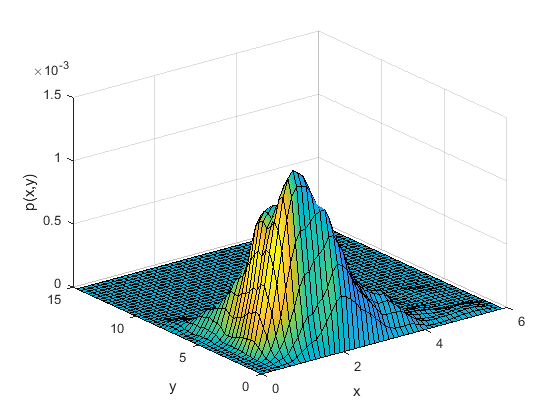}}
	\subfigure{\includegraphics[width=.5\textwidth]{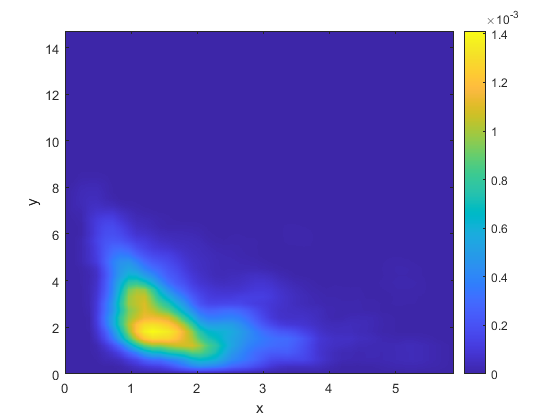}}\hspace{-1.5em}
	\subfigure{\includegraphics[width=.5\textwidth]{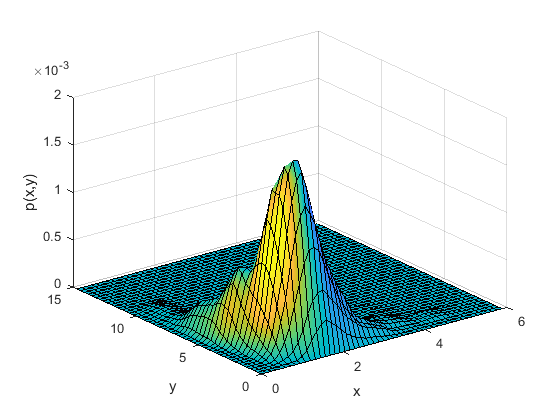}}
	\caption{Left top and right top subfigures:
The density of the invariant probability measure (the marginal one in the space of $(S(t),I(t))$) of \eqref{eq-ex-3} in 2D and 3D settings, respectively.
Bottom subfigure :
	The density of the invariant probability measure (the marginal one in the space of $(S(t),I(t))$) of \eqref{eq-ex-4} in 2D and 3D settings, respectively.}\label{fig-3}
\end{figure}

\end{example}


\FloatBarrier

\end{document}